\theoremstyle{plain}
\newtheorem{theorem}{Theorem}[section]
\newtheorem{corollary}[theorem]{Corollary}
\newtheorem{lemma}[theorem]{Lemma}
\newtheorem{definition}[theorem]{Definition}
\newtheorem{example}[theorem]{Example}
\newtheorem{conjecture}[theorem]{Conjecture}
\theoremstyle{definition}
\newtheorem{remark}[theorem]{Remark}
\renewcommand{\P}{\mathbb{P}}
\newcommand{\Z}{\mathbb{Z}}
\newcommand{\N}{\mathbb{N}}
\newcommand{\T}{\mathcal{T}}
\renewcommand{\r}{\mathsf{r}}
\renewcommand{\d}{\mathrm{d}}
\newcommand{\E}{\mathbb{E}}
\newcommand{\Pb}{\mathbb{P}}
\newcommand{\D}{\mathcal{D}}
\newcommand{\indicator}{\mathds{1}}
\newcommand{\G}{\mathsf{G}}
\newcommand{\Tgood}{\mathsf{T}_i^{\mathsf{good}}}
\newcommand{\pref}[1]{\hyperref[#1]{[p. \pageref{#1}]}}
\newcommand{\old}[1]{}
\DeclareMathOperator{\Var}{Var}
\title{A law of large numbers for the range of rotor walks on periodic trees}
\author{Wilfried Huss and Ecaterina Sava-Huss}
\date{\today}
\begin{document}
\maketitle

\begin{abstract}
The aim of the current work is to prove a law of large numbers for the range size of recurrent rotor walks with random initial configuration on a general class of trees, called \emph{periodic trees} or \emph{directed covers of graphs}. This generalizes \cite[Theorem 1.1]{huss_sava_range_speed_trees},
but the proofs are of different nature and rely on generating functions.
\end{abstract}


\textit{Key words and phrases:} 
rotor walk, range, rate of escape, periodic tree, Galton-Watson tree, generating function, law of large numbers, spectral radius, multitype branching process, recurrence, transience.

\section{Introduction}

In \cite{huss_sava_range_speed_trees}, we have considered rotor walks $(X_n)_{n\geq 0}$ with random initial configuration of rotors on regular trees and on Galton-Watson trees, and we have proven a law of large numbers for the size of the range and for the rate of escape. The proofs used in \cite{huss_sava_range_speed_trees}
are mostly based on analysing the behavior and the growth of certain branching processes that appear when looking at rotor walks on trees. We continue the investigation of the range of rotor walk in this work, and we prove similar results for positive recurrent rotor walks on periodic trees, but with completely different methods, that are based on generating functions and on linear algebra.

 Periodic trees are a straightforward generalization of regular trees.  With the methods we introduce here, we could also recover parts of the results in \cite{huss_sava_range_speed_trees}. Nevertheless, the methods for periodic trees are technically more involved, and give the results in terms of spectral radii of adjacency matrices related to the initial periodic tree and initial random rotor configuration. Transience and recurrence of rotor walks on such trees was investigated in \cite{huss_sava_trans_directed_covers}. 

Before stating the main result, we introduce shortly the setting we are working on. 
Let $\mathsf{G}$ be a finite graph on $\mathsf{N}$ vertices, $D$ its adjacency matrix, and $\mathsf{T}_i$ be a periodic tree (directed cover of $\mathsf{G}$), with root of type $i\in\{1,\ldots,\mathsf{N}\}$. A rotor configuration on $\mathsf{T}_i$ is a function which assigns to each vertex a rotor pointing to one of the neighbors, and the neighbors of each vertex are ordered counterclockwise. A rotor walk $(X_n^i)_{n\geq 0}$ on $\mathsf{T}_i$ is a process which starts at the root vertex, and at each time step it does the following: it first rotates the rotor to point to the next neighbor in the counterclockwise order, and then it moves to the neighbor the rotor points at. A child of a vertex is called good if the rotor walk visits this child before returning to the parent vertex. The tree of good children of $(X_n^i)$, which we denote by  $\Tgood$, is a subtree of $\mathsf{T}_i$, consisting of only good children. Suppose that $(X_n^i)$ is a rotor walk on $\mathsf{T}_i$, with $\mathcal{D}$-distributed random initial configuration of rotors; see the paragraph above Definition \ref{defn:good_children} for the precise definition of the distribution $\mathcal{D}$. Then $\Tgood$  is a \emph{multitype branching process} (MBP), whose first moment matrix will be denoted by $M$. It has been shown in \cite{huss_sava_trans_directed_covers} that $(X_n^i)$, for every $i\in\{1,\ldots,\mathsf{N}\}$ is recurrent if and only if the spectral radius $\rho(M)\leq 1$. The range $R_n^i=\{X_1^i,\ldots,X_n^i\}$ represents the number of distinct visited points by the walk $(X_n^i)$ up to time $n$ and $|R_n^i|$ its size. The walk $(X_n^i)$ starts at the root of $\mathsf{T}_i$, and the super index gives the dependence on the type $i$ of root vertex.
For the size $|R_n^i|$ of the rotor walk up to time $n$ we prove the following law of large numbers.
\begin{theorem}\label{thm:main-thm}
Let $(X_n^i)_{n\geq 0}$ be a  rotor walk with a $\mathcal{D}$-distributed random initial configuration of rotors on a periodic tree $\mathsf{T}_i$, with root of type $i\in\{1,\ldots,\mathsf{N}\}$.
For all $i\in\{1,\ldots,\mathsf{N}\}$, if $\rho(M)<1$, then 
\begin{equation*}
\frac{|R_n^i|}{n}\to \frac{1}{2}\left(1-\frac{1}{\gamma}\right),\quad \text{almost surely, as  } n\to \infty,
\end{equation*}
where $\gamma$ is the spectral radius of the matrix $I+(D-I)(I-M)^{-1}$.
\end{theorem}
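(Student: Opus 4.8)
The plan is to reduce the law of large numbers, through the exact edge‑balance structure of rotor walks, to an asymptotic estimate for how many times each edge of the explored subtree has been used, and then to read off that estimate from the multitype branching process $\Tgood$ by means of generating functions.

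Since $\rho(M)<1$, the walk $(X_n^i)$ is recurrent by \cite{huss_sava_trans_directed_covers}. As $\mathsf{T}_i$ is infinite and the rotor at a leaf of any finite subtree eventually turns toward a child, the walk cannot stay in a finite set, so its range is infinite; hence the times $\sigma_1<\sigma_2<\cdots$ at which the walk has visited exactly $1,2,\dots$ distinct vertices are all finite, and $|R_n^i|=k$ whenever $\sigma_k\le n<\sigma_{k+1}$. It therefore suffices to prove that, almost surely,
\begin{equation*}
\frac{k}{\sigma_k}\ \longrightarrow\ \tfrac12\Big(1-\tfrac1\gamma\Big)\qquad(k\to\infty),
\end{equation*}
since then the squeeze $\tfrac{k}{\sigma_{k+1}}\le\tfrac{|R_n^i|}{n}\le\tfrac{k}{\sigma_k}$ valid for $n\in[\sigma_k,\sigma_{k+1})$, together with $\tfrac{k}{k+1}\to1$, yields the statement for the whole sequence.

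To analyse $\sigma_k$, let $\T^{(k)}$ be the (connected) subtree of $\mathsf{T}_i$ spanned by the first $k$ visited vertices, so $|\T^{(k)}|=k$, and for an edge $e=\{p,v\}$ of $\T^{(k)}$ (with $v$ below $p$) let $a_k(e)$ be the number of traversals of $e$ from $v$ to $p$ up to time $\sigma_k$. At time $\sigma_k$ the walk sits at the $k$‑th discovered vertex, a leaf of $\T^{(k)}$, so the up‑ and down‑traversal counts of any edge agree except along the geodesic from $o$ to that leaf; counting directed traversals and vertices gives
\begin{equation*}
\sigma_k\;=\;2\sum_{e\in\T^{(k)}}a_k(e)\;+\;O\big(\operatorname{depth}\T^{(k)}\big),\qquad k\;=\;1+\#\{\text{edges of }\T^{(k)}\}.
\end{equation*}
One checks that $\operatorname{depth}\T^{(k)}=o(\sigma_k)$ in the recurrent regime, so the target reduces to showing that the average edge usage $\tfrac1k\sum_{e}a_k(e)$ converges almost surely to $(1-1/\gamma)^{-1}$, read as $+\infty$ (and the limiting density as $0$) when $\gamma=1$. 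The rotor mechanism splits the exits from a type‑$j$ vertex as evenly as possible among its $d_j+1$ neighbours (where $d_j:=\sum_kD_{jk}$ is the number of children of a type‑$j$ vertex), so $a_k$ changes by $O(1)$ along each edge; this makes the above average slowly varying and amenable to the branching analysis.

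The remaining --- and main --- step is to identify the limit of $\tfrac1k\sum_e a_k(e)$ from $\Tgood$. The key structural fact is that the growth of the explored tree is organised, at every vertex, by $\Tgood$: when a type‑$j$ vertex $v$ is first visited it sends the walk into a random subset of its children before the walk backtracks past $v$, and the numbers of such children of the various types have expectation matrix $M$; on later visits the rotor at $v$ advances further, and once $v$ has been visited enough times all $d_j$ of its children --- recorded by $D$ --- have been reached. The expected total size of $\Tgood$, split by type, is $(I-M)^{-1}=\sum_{r\ge0}M^r$. Encoding this recursion by generating functions --- for each type $j$, the joint generating function of the numbers of children of each type into which the walk descends during one pass through $v$ and of the additional time thereby spent --- yields a finite system of polynomial functional equations whose linearisation near the relevant fixed point is governed by $M$ and $D$. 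Carrying out the ensuing linear algebra shows that the number of edges $e$ with $a_k(e)\ge\ell$ is, up to lower order, multiplied by $\gamma=\rho\big(I+(D-I)(I-M)^{-1}\big)=\rho\big((D-M)(I-M)^{-1}\big)$ each time $\ell$ is decreased by one, and summing this (geometric, when $\gamma>1$) series over $\ell$ gives $\sum_e a_k(e)\sim(1-1/\gamma)^{-1}k$, as required. I expect the main obstacles to be (i) setting up the generating‑function system precisely enough that its Jacobian is exactly $I+(D-I)(I-M)^{-1}$, including the verification that $\gamma\ge1$ throughout the recurrent regime, and (ii) upgrading the convergence of the normalised edge‑usage profile from convergence in probability to the almost‑sure statement, which calls for a strong law of large numbers for the underlying branching quantities.
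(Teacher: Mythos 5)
Your overall architecture is reasonable, and your heuristic identification of $\gamma$ is correct: at the $k$-th return to the sink, an edge first discovered during the $j$-th excursion from the sink has been traversed $2(k-j+1)$ times, so the edge-usage profile decays geometrically with ratio $1/\gamma$ and the average usage tends to $(1-1/\gamma)^{-1}$, which is consistent with the claimed limit. The interpolation from a subsequence to all times via the discovery times $\sigma_k$ and the squeeze $\tfrac{k}{\sigma_{k+1}}\le\tfrac{|R_n^i|}{n}\le\tfrac{k}{\sigma_k}$ is also sound (and arguably cleaner than the paper's two-case analysis over the intervals $(\eta_k(j),\theta_k(j)]$ and $(\theta_k(j-1),\eta_k(j)]$). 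The genuine gap is that the load-bearing step --- the almost sure statement that the explored tree grows like $C\gamma^k$ per return to the sink with $C>0$ almost surely, equivalently the almost sure convergence of $\tfrac1k\sum_e a_k(e)$ --- is never established. You describe it as the output of ``a finite system of polynomial functional equations whose linearisation is governed by $M$ and $D$'' and then explicitly list as open obstacles (i) arranging that the Jacobian of this system is exactly $I+(D-I)(I-M)^{-1}$ and (ii) upgrading to almost sure convergence. Those two items are not obstacles to the proof; they are the proof.

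For comparison, the paper closes exactly this gap by exhibiting a concrete multitype Galton--Watson process rather than an implicit generating-function system: with $\mathsf{R}_k^i$ the range at the $k$-th return to the sink and $\mathbf{L}_k=\#\partial_o\mathsf{R}_k^i$ the type-vector of its leaves, Lemma \ref{lem:leaves_direct_cover} gives $\#\partial_oG=(D-I)\#G+\mathsf{e}_i$, Lemma \ref{lem:direct_cover_total_size} gives that the total progeny of the good-children MBP has mean matrix $(I-M)^{-1}$, and Lemma \ref{lem:direct_cover_total_size_second_moment} gives finite second moments; hence $(\mathbf{L}_k)$ is a multitype Galton--Watson process with mean matrix $\Gamma=\bigl(I+(D-I)(I-M)^{-1}\bigr)^T$ and finite second moments, and the Kesten--Stigum theorem yields $\gamma^{-k}\mathbf{L}_k\to W\mathbf{u}$ almost surely with $W>0$. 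The second-moment verification and the strict positivity of $W$ are precisely what legitimise the almost sure exponential growth of $\tau_k^i$ at rate exactly $\gamma$; without an analogue of this in your scheme, the convergence of $k/\sigma_k$ remains unproved. A secondary, fixable point: your assertion that $\tfrac1k\sum_e a_k(e)$ is ``slowly varying'' because rotors split exits evenly is not justified as stated --- between two consecutive discovery times the walk may retraverse a number of edges comparable to the whole explored tree --- but once the subsequence limit is in hand this is absorbed by the same squeeze you already use.
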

In the null recurrent case, when $\rho(M)=1$, we conjecture the following.
\begin{conjecture}
For all $i\in\{1,\ldots,\mathsf{N}\}$, if $\rho(M)=1$, then 
\begin{equation*}
\frac{|R_n^i|}{n}\to \frac{1}{2},\quad\text{ almost surely, as } n\to\infty.
\end{equation*}
\end{conjecture}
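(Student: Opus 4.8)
The plan is to recast the conjectured identity as a statement about the mean number of times the walk traverses each explored edge, prove the easy upper bound in full generality, and then obtain the matching lower bound by sandwiching against the subcritical regime already controlled by Theorem~\ref{thm:main-thm}. The point of criticality enters only in the lower bound.

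First I would set up the edge-crossing bookkeeping. Since the explored region at time $n$ is a finite subtree of $\mathsf{T}_i$, the number $E_n$ of distinct traversed edges equals $|R_n^i|$ up to an additive constant, and each step of the walk crosses exactly one edge, so $n=\sum_e c_e(n)$, where $c_e(n)$ is the total number of crossings of $e$ up to time $n$. Because the walk is recurrent and returns to the root infinitely often, every edge off the current root-to-$X_n^i$ geodesic is crossed an even number of times, hence at least twice; the edges on that geodesic number exactly the current depth, which I would bound by $o(n)$ almost surely from recurrence. This yields
\[
\frac{|R_n^i|}{n}=\frac{1}{2}\cdot\frac{1}{\bar c_n}\,(1+o(1)),\qquad \bar c_n:=\frac{1}{E_n}\sum_{e}\tfrac12\,c_e(n)\ \ge\ 1,
\]
so $\bar c_n$ is the mean number of two-way passages per explored edge, and in particular $\limsup_n |R_n^i|/n\le\tfrac12$ holds in all recurrent cases. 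The whole problem thus reduces to proving $\bar c_n\to 1$ almost surely, equivalently $\liminf_n |R_n^i|/n\ge\tfrac12$, precisely when $\rho(M)=1$.

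For the lower bound I would argue by monotone comparison. Fix $\epsilon>0$ and choose an initial law $\mathcal{D}_\epsilon$, close to $\mathcal{D}$, whose branching matrix $M_\epsilon$ is strictly subcritical with $\rho(M_\epsilon)\uparrow 1$ as $\epsilon\downarrow 0$, realised on the same tree by a coupling in which the tree of good children $\Tgood$ under $\mathcal{D}_\epsilon$ is contained in the one under $\mathcal{D}$. Via the abelian property of rotor walks this containment should propagate to the ranges at every fixed time, $|R_n^i(\mathcal{D}_\epsilon)|\le |R_n^i(\mathcal{D})|$. Granting this, Theorem~\ref{thm:main-thm} applied to $\mathcal{D}_\epsilon$ gives $\liminf_n |R_n^i(\mathcal{D})|/n\ge\tfrac12\bigl(1-1/\gamma_\epsilon\bigr)$, where $\gamma_\epsilon$ is the spectral radius of $I+(D-I)(I-M_\epsilon)^{-1}$. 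Since $(I-M_\epsilon)^{-1}=\sum_{k\ge 0}M_\epsilon^{\,k}$ blows up entrywise as $\rho(M_\epsilon)\uparrow 1$, one checks that $\gamma_\epsilon\to\infty$, and letting $\epsilon\downarrow 0$ gives $\liminf_n |R_n^i|/n\ge\tfrac12$, which together with the upper bound proves the claim. This also explains the form of the conjecture: it is exactly the $\gamma\to\infty$ degeneration of the subcritical limit $\tfrac12(1-1/\gamma)$.

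The main obstacle is the monotone coupling at a \emph{fixed} number of steps. Enlarging the set of good children lengthens the explored tree but also lengthens each excursion, so it is not a priori clear that more good children yield a larger range at a fixed time $n$ rather than merely over a fixed number of excursions; making this rigorous requires a least-action comparison for rotor walks with ordered initial configurations, together with the construction of a subcritical family $\mathcal{D}_\epsilon$ that is genuinely dominated by $\mathcal{D}$ while keeping $M_\epsilon$ subcritical and convergent to $M$. A secondary difficulty is intrinsic to $\rho(M)=1$: there $\Tgood$ is a \emph{critical} MBP, so excursion lengths have infinite mean and the renewal and generating-function estimates underpinning Theorem~\ref{thm:main-thm} diverge, which is precisely why no direct computation of the constant is available and why either the sandwiching above, or alternatively a truncated generating-function analysis coupled with a Tauberian argument for the heavy-tailed excursions, appears necessary. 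I expect the depth bound $o(n)$ to be routine, so the real content lies in the monotonicity step and in controlling the critical branching fluctuations.
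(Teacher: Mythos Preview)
The statement you are attempting is labelled a \emph{conjecture} in the paper and is not proved there; the authors merely remark that a law of large numbers ``can also'' be established in the null-recurrent case but decline to carry out the calculations. There is thus no proof in the paper to compare your attempt against.

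On its own terms your plan has a decisive gap in the lower bound: the pathwise monotonicity $|R_n^i(\rho_\epsilon)|\le |R_n^i(\rho)|$ for coupled configurations $\rho_\epsilon\ge\rho$ (fewer good children under $\rho_\epsilon$) is \emph{false}, not merely unproved. Take the root $r$ with children $a=r^{(1)}$, $b=r^{(2)}$; give $a$ a chain $a,a_1,\dots,a_{10}$ of good descendants and give $b$ none. Under $\rho(r)=0$ the first excursion traverses the whole $a$-chain and $b$, returns to the sink at time $25$, and the second excursion must then retrace the entire $a$-chain before reaching any new vertex. Under $\rho'(r)=1\ge\rho(r)$ the walk visits only $b$, hits the sink at time $3$, then in its second excursion discovers the $a$-chain and proceeds on to $b$'s child. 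A direct count gives $|R_{28}(\rho')|=15>14=|R_{28}(\rho)|$: the configuration with \emph{fewer} good children has the \emph{larger} range at this time, because its early sink return avoids the long wasted retraversal that $\rho$ incurs. The abelian property cannot repair this, since it concerns order-independence of firings for a \emph{single} rotor field and gives no comparison principle between two distinct fields at a fixed step count. Your upper bound also rests on $d_n=o(n)$, which does not follow from recurrence alone and requires quantitative height control for the contour of critical multitype Galton--Watson trees; but the fatal issue is the monotonicity step, which you correctly flag as the main obstacle and which in fact fails.
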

Let us briefly comment on the proof of Theorem \ref{thm:main-thm}.  For $\rho(M)<1$, the methods involved use generating functions equalities, multitype Galton-Watson processes, and are quite technical in comparison to the ones used on regular trees in  \cite{huss_sava_range_speed_trees}. For the other two cases (when the rotor walk is null recurrent ($\rho(M)=1$) and transient ($\rho(M)>1$)) one can
also prove a law of large numbers for the size of the range, but the amount of technical calculations will be too high for the expected result. Range of rotor walks and its shape was considered also in \cite{florescu_levine_peres} on comb lattices and on Eulerian graphs. It is conjectured in \cite{KD}, that on $\Z^2$, the range of uniform rotor walks is asymptotically a disk, and its size if of order $n^{2/3}$. This is probably hard to achieve, since on $\Z^2$ is not even known whether the uniform rotor walk is recurrent or transient.
For recent results  on rotor walks on transient graphs with initial rotor configuration sampled from the wired uniform spanning forest oriented toward infinity measure see \cite{swee-hong-wsf,swee-hong-wsf1}. See also \cite{swee-escape-rate} where for any given graph, a rotor configuration with maximum escape rate is constructed.

\section{Preliminaries}

Since we use the results from \cite{huss_sava_trans_directed_covers} on recurrence/transience of rotor walks on periodic trees, we keep the same notation and definitions as used there.

\subsection{Periodic trees}

\emph{Periodic trees} are also known in the literature as \emph{directed covers of graphs}
or as \emph{trees with finitely many cone types}. Such trees have interesting properties in what concerns both the behavior of random walks and of rotor walks on them. We add in the appendix of the paper an example of a rotor-recurrent periodic tree that contains rotor-transient subtrees.

\paragraph{Graphs and Trees.}
Let $\G=(V,E)$ be a locally finite and connected directed multigraph, with vertex set $V$
and edge set $E$. For ease of presentation, we shall identify the graph $\G$ with its vertex 
set $V$, i.e., $i\in \G$ means $i\in V$. If $(i,j)$ is an edge of $\G$, we write $i\sim_{\G}j$,
and write $\d(i,j)$ for the {\em graph distance}.
Let $D=(d_{ij})_{i,j\in\G}$ be the \emph{adjacency matrix} of $\G$, where $d_{ij}$ is
the number of directed edges connecting $i$ to $j$.
We write $d_i$ for the sum of the entries in the $i$-th row of $D$, that is
$d_{i}=\sum_{j\in\G} d_{ij}$ is the {\em degree} of the vertex $i$. 

A {\em tree} $\T$ is a connected, cycle-free graph. A {\em rooted tree} is a tree with a distinguished vertex
$r$, called {\em the root}. For a vertex $x\in\T$, denote by $|x|$ the {\em height} of $x$, that is the graph distance from the root to $x$.
For  $x\in \T\setminus\{r\}$, denote by $x^{(0)}$ its {\em ancestor}, that is
the unique neighbor of $x$ closer to the root $r$.
We attach to $\T$ an additional vertex $o$ to the root $r$, which will
be considered in the following as a sink vertex.
Additionally we fix a planar embedding of $\T$ and enumerate
the neighbors of a vertex $x\in \T$ in counterclockwise order $\big(x^{(0)}, x^{(1)}, \ldots, x^{(d_x-1)}\big)$
beginning with the ancestor.
We will call a vertex $y$ a \emph{descendant} of $x$, if $x$ lies on the unique shortest path from $y$
to the root $r$. A descendant of $x$, which is also a neighbor of $x$, will be called a \emph{child}.
The \emph{principal branches} of $\T$ are the subtrees rooted at the children of the root $r$.

\paragraph{Directed Covers of Graphs.}
Suppose now that $\G$ is a finite, directed and strongly connected multigraph with  adjacency matrix $D=(d_{ij})$. 
Let $\mathsf{N}$ be the cardinality of the vertices of $\G$, and label the vertices of $\G$ by $\{1,2,\ldots, \mathsf{N} \}$.
The {\em directed cover} $\mathsf{T}$ of $\G$ is defined recursively as a rooted tree
 whose vertices are labeled by the vertex set $\{1,2,\ldots, \mathsf{N} \}$ of $\G$.
The root $r$ of $\mathsf{T}$ is labeled with some $i\in\{1,2,\ldots, \mathsf{N} \}$. Recursively, if $x$ is a vertex in $\mathsf{T}$ with
label $i\in \G$, then $x$ has $d_{ij}$ descendants with label $j$. We define
the {\em label function} $\tau:\mathsf{T}\to\G$ as the map that associates to each vertex in $\mathsf{T}$ its label 
in $\G$. The label $\tau(x)$ of a vertex $x$ will be also called the {\em type} of $x$. 
For a vertex $x\in\mathsf{T}$, we will not only need its type, but also the types of its children.
In order to keep track of the type of a vertex and the types of its children we
introduce the {\em generation function} $\chi = \left(\chi_i\right)_{i\in\G}$ with 
$\chi_i:\{1,\ldots,d_i\}\to\G$. For a vertex $x$ of type $i$,
$\chi_i(k)$ represents the type of the $k$-th child $x^{(k)}$ of $x$, i.e.,
\begin{equation*}
\text{if } \tau(x)=i \text{ then } \chi_i(k)=\tau(x^{(k)}), \text{ for } k=1,\ldots,d_i.
\end{equation*}
As the neighbors $\left(x^{(0)},\ldots, x^{(d_{\tau(x)})}\right)$ of any vertex $x$ are drawn in counterclockwise order,
the generation function $\chi$ also fixes the planar embedding of the tree and thus defines $\mathsf{T}$ uniquely
as a planted plane tree. The tree $\mathsf{T}$ constructed
in this way is called the \emph{directed cover} of $\G$. Such trees are also known as
\emph{periodic trees}, see \cite{lyons-peres-book}.
The graph $\G$ is called the \emph{base graph} or the \emph{generating graph} for the tree $\mathsf{T}$.
We write $\mathsf{T}_i$ for a tree with root $r$ of type $i$, that is $\tau(r)=i$, and we say that 
$\mathsf{T}_i$ is a periodic tree with $\mathsf{N}$ types of vertices and root of type $i\in\{1,2,\ldots,\mathsf{N}\}$.

\subsection{Multitype branching processes.}

A multitype branching process (MBP) is a generalization of a Galton-Watson process,
where  one allows a finite number of distinguishable types of particles with different
probabilistic behavior. The particle types will coincide with the different types of vertices
in the periodic trees under consideration, and will be denoted by $\{1,\ldots,\mathsf{N}\}$.

A \emph{multitype branching process} is a Markov process $(\mathsf{Z}_n)_{n\in\N_0}$
such that the states $\mathsf{Z}_n = (\mathsf{Z}_{n,1},\ldots,\mathsf{Z}_{n,\mathsf{N}})$ are $\mathsf{N}$-dimensional vectors with non-negative components. The initial state
$\mathsf{Z}_0$ is nonrandom. The $i$-th entry $\mathsf{Z_{n,i}}$ of $\mathsf{Z}_n$ represents the number of particles of type $i$ in the 
$n$-th generation. The transition law of the process is as follows. If $\mathsf{Z}_0=\mathsf{e}_i$,
where $\mathsf{e}_i$ is the $\mathsf{N}$-dimensional vector whose $i-th$ component is $1$ and all the
others are $0$, then $\mathsf{Z}_n$ has the generating function
$\mathsf{f}(\mathsf{z}) = \big(f^1(\mathsf{z}),\ldots,f^{\mathsf{N}}(\mathsf{z})\big)$ with
\begin{equation}
\label{eq:MBP_generating_function}
 f^i(\mathsf{z}) = f^i(z_1,\ldots,z_{\mathsf{N}})
 = \smashoperator{\sum_{s_1,\ldots,s_{\mathsf{N}} \geq 0}} p^i(s_1,\ldots,s_{\mathsf{N}})z_1^{s_1}\cdots z_{\mathsf{N}}^{s_{\mathsf{N}}},
\end{equation}
and $ 0\leq z_1,\ldots,z_{\mathsf{N}} \leq 1$, where $p^i(s_1,\ldots,s_{\mathsf{N}})$ is the probability that a 
particle of type $i$ has $s_j$ children
of type $j$, for $j=1,\ldots,\mathsf{N}$. For $\mathsf{i}=(i_1,\ldots,i_{\mathsf{N}})$
and $\mathsf{j}=(j_1,\ldots,j_{\mathsf{N}})$, the one-step transition probabilities are given by
\begin{equation*}
\mathsf{p}(\mathsf{i},\mathsf{j})
   =\P[\mathsf{Z}_{n+1}=\mathsf{j}|\mathsf{Z}_n=\mathsf{i}] 
   = \text{coefficient of } \mathsf{z}^{\mathsf{j}}
      \text{ in } \big(\mathsf{f}(\mathsf{z})\big)^{\mathsf{i}},
\end{equation*}
where  $\big(\mathbf{f}(\mathbf{z})\big)^\mathbf{i} = \prod_{k=1}^{\mathsf{N}} f^k(\mathbf{z})^{i_k}$. For vectors $\mathsf{z},\mathsf{s}$, we write $\mathsf{z}^{\mathsf{s}}=(z_1^{s_1},\ldots,z_{\mathsf{N}}^{s_{\mathsf{N}}})$.
Let $M=(m_{ij})$ be the matrix of the first moments:

\begin{equation}
\label{eq:good_children_first_moment_matrix}
m_{ij} = \E\big[\mathsf{Z}_{1,j}\lvert \mathsf{Z}_0 = \mathbf{e}_i\big] 
= \left.\frac{\partial f^i(z_1,\ldots,z_{\mathsf{N}})}{\partial z_j}\right\rvert_{\mathsf{z}=\mathbf{1}},
\end{equation}
where $\mathbf{1} = (1,\ldots,1)$. Then $m_{ij}$ represents the expected number of offsprings of type $j$ of a particle of type $i$ in one generation.
If there exists an $n$ such that $m_{ij}^{(n)}>0$ for all $i,j$, then $M$
is called {\em strictly positive} and the process $\mathsf{Z}_n$ is called
{\em positive regular}. If each particle has exactly one child, then $\mathsf{Z}_n$
is called {\em singular}. The following is well known; see {\sc Harris} \cite{harris_book}.
\begin{theorem}\label{thm:surv/extinc}
 Assume $\mathsf{Z}_n$ is positive regular and nonsingular, and let $\rho(M)$ be the spectral radius
 of $M$. If $\rho(M)\leq 1$, then the process $\mathsf{Z}_n$ dies with probability one.
 If $\r(M)>1$, then $\mathsf{Z}_n$ survives with positive probability.
\end{theorem}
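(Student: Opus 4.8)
\emph{Proof plan.} The plan is to read off survival versus extinction from the smallest fixed point of the offspring generating function $\mathsf{f}=(f^1,\dots,f^{\mathsf{N}})$, and to locate that fixed point using convexity of $\mathsf{f}$ together with the Perron--Frobenius theory of $M$. Writing $\mathsf{f}_n$ for the $n$-fold iterate of $\mathsf{f}$, and using that $\{\mathsf{Z}_n=\mathbf{0}\}$ increases in $n$ with $\P[\mathsf{Z}_n=\mathbf{0}\mid\mathsf{Z}_0=\mathbf{e}_i]=f^i_n(\mathbf{0})$, the extinction probability vector equals $q=\lim_n\mathsf{f}_n(\mathbf{0})$; by continuity and coordinatewise monotonicity of $\mathsf{f}$ it is the smallest solution of $q=\mathsf{f}(q)$ in $[0,1]^{\mathsf{N}}$. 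Each $f^i$ is a power series with nonnegative coefficients, hence coordinatewise nondecreasing and convex on $[0,1]^{\mathsf{N}}$, with $f^i(\mathbf{1})=1$ and $\nabla f^i(\mathbf{1})$ equal to the $i$-th row of $M$. Positive regularity means precisely that $M$ is a primitive nonnegative matrix, so by Perron--Frobenius $\rho(M)$ is a simple eigenvalue with strictly positive left and right eigenvectors $u$ and $v$. I will show $q=\mathbf{1}$ when $\rho(M)\le 1$ and $q<\mathbf{1}$ in every coordinate when $\rho(M)>1$; since the sub-populations issuing from the initial particles evolve independently, this settles the claim for an arbitrary (nonrandom, finite) initial state $\mathsf{Z}_0$.

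For the supercritical case $\rho(M)>1$ I would expand $\mathsf{f}$ at $\mathbf{1}$ along the Perron direction: as $t\downarrow0$,
\begin{equation*}
\mathsf{f}(\mathbf{1}-tv)=\mathbf{1}-tMv+O(t^2)=\mathbf{1}-t\,\rho(M)\,v+O(t^2).
\end{equation*}
Since $v>\mathbf{0}$ and $\rho(M)-1>0$ the linear term dominates, so there is $t_0>0$ with $\mathbf{0}\le\mathbf{1}-t_0v$ and $\mathsf{f}(\mathbf{1}-t_0v)\le\mathbf{1}-t_0v$ coordinatewise. Monotonicity of $\mathsf{f}$ then gives $\mathsf{f}_n(\mathbf{0})\le\mathbf{1}-t_0v$ for all $n$ by induction, hence $q\le\mathbf{1}-t_0v<\mathbf{1}$ and the process survives with positive probability from every type.

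For $\rho(M)\le1$ put $w=\mathbf{1}-q\ge\mathbf{0}$. From $q=\mathsf{f}(q)$, $\mathbf{1}=\mathsf{f}(\mathbf{1})$ and the fundamental theorem of calculus, $w=\int_0^1 D\mathsf{f}(\mathbf{1}-sw)\,w\,\d s$, and convexity forces $D\mathsf{f}(\mathbf{1}-sw)\le D\mathsf{f}(\mathbf{1})=M$ entrywise, so $w\le Mw$. Pairing with $u>\mathbf{0}$, one gets $u\cdot w\le u\cdot(Mw)=\rho(M)\,u\cdot w\le u\cdot w$, so every inequality is an equality. If $\rho(M)<1$ this forces $u\cdot w=0$, hence $w=\mathbf{0}$ and $q=\mathbf{1}$ (equivalently, and more directly, $\E[\mathsf{Z}_n\cdot\mathbf{1}\mid\mathsf{Z}_0=\mathbf{e}_i]=(M^n\mathbf{1})_i\to0$ because $\rho(M)<1$, so $\P[\mathsf{Z}_n\ne\mathbf{0}]\to0$ and extinction, being absorbing, is almost sure). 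In the critical case $\rho(M)=1$, equality together with $Mw\ge w$ and $u>\mathbf{0}$ gives $Mw=w$; substituting back into $w=\int_0^1 D\mathsf{f}(\mathbf{1}-sw)\,w\,\d s\le Mw=w$ shows the nonnegative integrand $(M-D\mathsf{f}(\mathbf{1}-sw))w$ vanishes for almost every $s\in[0,1]$. Here I would invoke the dichotomy, a consequence of positive regularity, that either $w=\mathbf{0}$ or $w>\mathbf{0}$ in every coordinate: in the latter case $\partial_jf^i(\mathbf{1}-sw)=m_{ij}$ for a.e.\ $s$ and all $i,j$, and since $\partial_jf^i$ is a power series it must be constant, so every $f^i$ is affine; but an affine probability generating function vector with $\rho(M)=1$ must have all row sums of $M$ equal to $1$ (its Perron eigenvector being constant), which means each particle has exactly one child, contradicting nonsingularity. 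Hence $w=\mathbf{0}$, $q=\mathbf{1}$, and the process dies almost surely.

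The main obstacle is the critical case $\rho(M)=1$: the sub- and supercritical cases need only a first-moment estimate and a one-term Taylor expansion, whereas at criticality one must use the convexity of $\mathsf{f}$ sharply and genuinely exploit nonsingularity, which cannot be dropped since a singular process with $\rho(M)=1$ survives forever.
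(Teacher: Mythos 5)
This theorem is not proved in the paper at all: it is the classical extinction criterion for positively regular, nonsingular multitype Galton--Watson processes, quoted with a citation to Harris's book, so there is no in-paper argument to compare against. Your proof is essentially the standard textbook one (extinction vector as the minimal fixed point of $\mathsf{f}$ in $[0,1]^{\mathsf{N}}$, a Taylor expansion along the Perron direction for $\rho(M)>1$, and the convexity/Perron--Frobenius pairing $w\le Mw$, $u\cdot w\le \rho(M)\,u\cdot w$ for $\rho(M)\le 1$), and it is correct, including the delicate critical case where you correctly reduce the equality case to $D\mathsf{f}$ being constantly $M$ along the segment, hence $\mathsf{f}$ affine, hence the process singular. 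Two minor points worth tightening: the expansion $\mathsf{f}(\mathbf{1}-tv)=\mathbf{1}-t\rho(M)v+O(t^2)$ presumes finite second moments, which holds here because the offspring distributions have finite support, but in general one should write $o(t)$ (obtained from monotone convergence of $D\mathsf{f}$ at $\mathbf{1}$); and the dichotomy ``$w=\mathbf{0}$ or $w>\mathbf{0}$'' is asserted rather than proved --- it follows in one line from positive regularity (if some type survives with positive probability, every type reaches it in finitely many generations with positive probability), and since it carries the weight of the critical case a sentence of justification would be appropriate.
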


We will also make use of the mixed second moments defined as following:
\begin{equation}
\sigma^i_{jk} = \E\big[\mathsf{Z}_{1,j}\mathsf{Z}_{1,k}\lvert \mathsf{Z}_0 = \mathbf{e}_i\big]
= \left.\frac{\partial^2 f^i(z_1,\ldots,z_{\mathsf{N}})}{\partial z_j\partial z_k}\right\rvert_{\mathsf{z}=\mathbf{1}}.
\end{equation}

\subsection{Rotor walks}

Let $\T$ be a rooted tree with root $r$ and for each vertex $x$ order its neighbors counterclockwise $\{x^{0},\ldots,x^{d_x-1}\}$.  A \emph{rotor configuration} is a function
$\rho: \T\to \T$, with $\rho(x) \sim x$, for all $x\in \T$.
By abuse of notation, we write $\rho(x)=i$ if the rotor at $x$ points
to the neighbor $x^{(i)}$. A \emph{rotor walk} $(X_n)_{n\geq 0}$ is defined by the following rule.
Let $x$ be the current position of the walker, and $\rho(x) = i$ the state of
the rotor at $x$. In one step the walker does the following: it increments the rotor at $x$ to point to the next neighbor $x^{(i+1)}$ in the counterclockwise order of the neighbors of $x$, that is $\rho(x)$ is set to $i+1$ (with addition performed modulo $d_x$). Then it moves to position $x^{(i+1)}$. The rotor walk is obtained by repeatedly
applying this rule. We suppose that it starts at the root, that is $X_0=r$.

For a vertex $x\in\T$ define the set of \emph{good
children} as $\big\{x^{(k)}:\: \rho(x) < k \leq d_x\big\}$. This means that a particle performing rotor walk will first visit all its good children before visiting its ancestor. An infinite sequence of vertices
$\big(x_n\big)_{n\in\N}$ with each vertex being a child of the previous one, is called a \emph{live path} if for every
$n\geq 0$ the vertex $x_{n+1}$ is a good child of $x_n$. An {\em end} of $\T$
is an infinite sequence of vertices $x_1,x_2,\ldots$ each being the ancestor of the next.
An end is called {\em live} if the subsequence $(x_i)_{i\geq j}$ starting at one of its vertices is a live path. This definitions were introduced in \cite{angel_holroyd}, and we use them as there.

\paragraph{Nondeterministic Rotor Configurations on Directed Covers.}
Let $\mathsf{T}_i$ be a periodic tree with $\mathsf{N}$ types of vertices, and root $r$ of type $i$, to whom an additional sink vertex $o$ is added. Moreover, let $(X_n^i)$ be a rotor walk on $\mathsf{T}_i$
starting at $r$, with initial random configuration, distributed as following. 
Let  $\D=(\D_1,\ldots,\D_{\mathsf{N}})$ be a vector of probability distributions: for each $i\in\{1,\ldots,\mathsf{N}\}$,
$\D_i$ is a probability distribution with values in $\{0,\ldots,d_i\}$. Consider 
a \emph{random initial configuration $\rho$} of rotors on $\mathsf{T}_i$, such that $\left(\rho(x)\right)_{x\in\mathsf{T}_i}$
are independent random variables, and $\rho(x)$ has distribution $\D_j$ if the vertex $x$ is of type $j$.
Shortly
\begin{equation}\label{eq:random_cfg}
 \rho(x) \stackrel{d}{\sim} \D_j \quad \Longleftrightarrow \quad \tau(x)=j.
\end{equation}
If \eqref{eq:random_cfg} is satisfied,
we shall say that the rotor configuration $\rho$ is $\D=(\D_1,\ldots,\D_{\mathsf{N}})$-distributed,
and we write $\rho\stackrel{d}{\sim}\D$.

\begin{definition}
\label{defn:good_children}
For $i\in\{1,\ldots,\mathsf{N}\}$ and $k\in\{0,\ldots,d_i\}$ denote by $\mathfrak{C}_i^j(k)$ the number of good children with type $j$ of
a vertex $x$ with type $i$, if the rotor $\rho(x)$ at $x$ is in position $k$, i.e.,
\begin{equation*}
\mathfrak{C}_i^j(k) = \#\big\{l\in\{k+1,\ldots,d_i\}:\: \chi_i(l) = j\big\}.
\end{equation*}
\end{definition}
We have that
$
 \sum_{j=1}^{\mathsf{N}}\mathfrak{C}_i^j(k)=d_i-k.
$
Using this definition we can now define a MBP which models connected subtrees consisting of only good children.
In this MBP, $p^i(s_1,\ldots,s_{\mathsf{N}})$ represents the probability that a vertex of type
$i$ has $s_j$ good children of type $j$, with $j=1,\ldots,\mathsf{N}$.
Define the generating function of the MBP as in \eqref{eq:MBP_generating_function} and the 
probabilities $p^i$  by
\begin{equation}\label{eq:mbp}
p^i(s_1,\ldots,s_m) = \begin{cases}
                      \D_i(k)\; &\text{if for all $j=1,\ldots,\mathsf{N}:\  s_j=\mathfrak{C}_i^j(k)$, and $k\in\{0,\ldots,d_i\}$}, \\
                      0         &\text{otherwise},
                      \end{cases}
\end{equation}
with  $\D_i(k)=\P[\rho(x)=k]$,  for $k\in\{0,\ldots,d_i\}$ and $i\in\{1,\ldots,\mathsf{N}\}$.
In the following we always make the additional assumption that this MBP is \emph{positive
regular and nonsingular}, such that Theorem \eqref{thm:surv/extinc} can be applied. In
particular, when the rotors
point to every neighbor with positive probability these two conditions are always satisfied.
Let $M(\D)$ be the first moment matrix --- as defined in \eqref{eq:good_children_first_moment_matrix} --- of the MBP
with offspring probabilities given in \eqref{eq:mbp}, and $\rho=\rho(M(\D))$ its spectral radius.
It has been shown in \cite{huss_sava_trans_directed_covers} that
the rotor walk $(X_n^i)$ is recurrent if and only if $\rho \leq 1$, and transient if $\rho>1$.
For the MBP with offspring distribution as in  \eqref{eq:mbp}, since every vertex in $\mathsf{T}_i$ has finite degree, also the entries $\sigma_{jk}^i$  of the second moment matrix are finite,
for all $i,j,k\in\{1,\ldots,\mathsf{N}\}$.

If $\rho(M(\mathcal{D}))=1$, we call $(X_n)$ \emph{null recurrent}, and we refer to the \emph{critical case}. Otherwise, if $\rho(M(\mathcal{D}))<1$, we say that $(X_n)$ is positive recurrent.

\section{Positive recurrent rotor walks}

From now on, we let $\mathsf{T}_i$ be a periodic tree with $\mathsf{N}$ types, and root of type $i\in\{1,\ldots,\mathsf{N}\}$, and let $D$ the $\mathsf{N}\times\mathsf{N}$-adjacency matrix of the finite graph which generates $\mathsf{T}_i$. Let $(X_n^i)$ be a rotor walk on $\mathsf{T}_i$ with $\mathcal{D}$-distributed initial configuration of rotors, as defined in \eqref{eq:random_cfg}. Denote by $\mathsf{T}_i^{\mathsf{good}}$ the tree of good children of $(X_n^i)$, and the associated MBP (multitype branching process) with transition probabilities as in \eqref{eq:mbp}. We denote again by $\mathsf{Z}_n$ the size of the $n$-th generation of this MBP, that is, $\mathsf{Z}_n=(\mathsf{Z}_{n,1},\ldots,\mathsf{Z}_{n,\mathsf{N}})$, where $\mathsf{Z}_{n,i}$ represents the number of good children of type $i$ in the $n$-th generation of the MBP. Finally, let $M=M(\mathcal{D})$ be the first moment matrix of $\mathsf{Z}_n$. 

In this section we handle the case $\rho(M) < 1$, when the rotor walk $(X_n^i)$ is \emph{positive recurrent}.  
We first look a look at the range $R_n^i=\{X_1^{(i)},\ldots,X_n^i\}$ at times $(\tau_k^i)$, when the rotor walk $(X_n^i)$ returns to the sink $o$ for the $k$-th time: $\tau_0^{i} = 0$ and for $k\geq 1$ let
$$\tau_k^i = \inf\{n > \tau_{k-1}^i: X_n^i = o\}.$$
In the recurrent case, these stopping times are almost surely finite.
Let us denote by $\mathsf{R}_k^i:=R_{\tau_k^i}^{i}$. Then
\begin{equation}\label{eq:tauk-rk}
\tau^{i}_k-\tau^{i}_{k-1}=2|\mathsf{R}^i_k|.
\end{equation}
The equation above has the following explanation: at time $\tau_k^i$ the walker is at the sink, all rotors in the explored part $\mathsf{R}_k^i$ of the tree $\mathsf{T}_i$ point toward the root, while all other rotors are still in their random initial configuration. Between two consecutive stopping times $\tau_{k}^i$ and $\tau_k^i$, the walker performs a depth first search in the finite subtree induced by $\mathsf{R}_k^i$, by visiting every child of a vertex from right to left order. In a depth first search of $\mathsf{R}_k^i$ is visited exactly two times.
We first prove the following result.
\begin{theorem}
\label{thm:lln_direct_cover}
Let $\gamma$ be the spectral radius of the matrix $I+(D-I)(I-M)^{-1}$. For all $i=1,\ldots,\mathsf{N}$ we have the following strong law of large numbers at times $(\tau_k^i)$:
\begin{equation*}
\lim_{k\to \infty} \frac{ |\mathsf{R}_k^i|}{\tau_k^i} = \frac{1}{2}\left(1-\frac{1}{\gamma}\right) ,
\quad \text{a.s.}
\end{equation*}
\end{theorem}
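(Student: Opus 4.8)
The plan is to pass to the excursion times $\tau_k^i$, reduce the statement to the geometric growth rate of $|\mathsf{R}_k^i|$, and obtain that rate from a Kesten--Stigum law of large numbers for a multitype branching process encoding which vertex is discovered in which excursion. Summing \eqref{eq:tauk-rk} gives $\tau_k^i=2\sum_{j=1}^k|\mathsf{R}_j^i|$, so $\frac{|\mathsf{R}_k^i|}{\tau_k^i}=\frac{|\mathsf{R}_k^i|}{2\sum_{j=1}^k|\mathsf{R}_j^i|}$, and everything reduces to showing that $|\mathsf{R}_k^i|$ grows geometrically at rate $\gamma$: if $\gamma>1$ and $|\mathsf{R}_k^i|\gamma^{-k}\to L$ almost surely for some random $L>0$, then $\sum_{j\le k}|\mathsf{R}_j^i|\sim\frac{\gamma}{\gamma-1}|\mathsf{R}_k^i|$ (geometric sum / Toeplitz lemma) and the ratio converges to $\frac{1}{2}(1-\frac{1}{\gamma})$. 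In the remaining boundary case $\gamma=1$ one instead shows $|\mathsf{R}_k^i|\sim\mu k$, which makes the limit equal to $0=\frac{1}{2}(1-\frac{1}{\gamma})$.

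The heart of the argument is a combinatorial/branching description of $|\mathsf{R}_k^i|$. Call a child $x$ of $y$ a \emph{bad child} of $y$ if it is not a good child, i.e.\ if its position among the children of $y$ is at most $\rho(y)$, and for $x\in\mathsf{T}_i$ let $w(x)$ be the number of vertices on the geodesic from $r$ to $x$, other than $r$, that are bad children of their parent. Tracing the rotor walk excursion by excursion one proves the key identity that $x$ is first visited precisely during the excursion with index $w(x)+1$ --- a good child is discovered in the same excursion as its parent, a bad child one excursion later --- so that, up to an $O(1)$ correction coming from $r$ and $o$, one has $|\mathsf{R}_k^i|=\sum_{m=0}^{k-1}|A_m|$, where $A_m=\{x\in\mathsf{T}_i:w(x)=m\}$ is the $m$-th ``layer''. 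Here $A_0$ is the tree of good children of $r$ and, for $m\ge1$, $A_m$ is the union, over the set $B_m$ of bad children of the vertices of $A_{m-1}$, of the trees of good children hanging from them, while $B_{m+1}$ is the set of bad children of the vertices of those good-children trees. Since trees of good children rooted at distinct elements of $B_m$ use disjoint --- hence independent --- families of initial rotors, and such a tree rooted at a type-$j$ vertex has a fixed offspring law, the sequence $(B_m)_{m\ge1}$ is a multitype branching process. Its first-moment matrix is $(I-M)^{-1}(D-M)$: by the composition equation $u^j(\mathbf z)=z_jf^j(u(\mathbf z))$ for the total-progeny generating function $u$ of the good-children MBP (finite since $\rho(M)<1$), a type-$j$ vertex has on average $\mathbf e_j(I-M)^{-1}$ vertices (by type, including itself) in its tree of good children, and each of these, of type $j'$, has on average $(D-M)_{j'k}$ bad children of type $k$. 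A one-line computation with $(I-M)(I-M)^{-1}=I$ gives $I+(D-I)(I-M)^{-1}=(D-M)(I-M)^{-1}$, and since $AB$ and $BA$ have equal spectral radius, $\rho\big((I-M)^{-1}(D-M)\big)=\gamma$. Conditionally on $B_m$, moreover, $|A_m|$ is a sum over $v\in B_m$ of independent good-tree sizes (i.i.d.\ for vertices of equal type), with conditional mean $B_m(I-M)^{-1}\mathbf 1$.

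It then remains to run the asymptotics. Every $x$ has $w(x)<\infty$, so each layer $A_m$ --- and hence each $B_m$ --- is almost surely finite ($A_0$ is a subcritical MBP and each $A_m$ a finite union of subcritical MBPs); were $(B_m)$ to become extinct at some $m_0$, then $\mathsf{T}_i=\bigcup_{m<m_0}A_m$ would be a finite union of finite sets, contradicting that $\mathsf{T}_i$ is infinite. So $(B_m)$ never dies, and by Harris's Theorem~\ref{thm:surv/extinc} this forces $(B_m)$ to be either supercritical (so $\gamma>1$) or singular (every particle has exactly one child, the first-moment matrix is stochastic, $\gamma=1$); in particular $\gamma\ge1$ always. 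If $\gamma>1$: since the good-children MBP is subcritical with finite mixed second moments $\sigma^i_{jk}$, its total progeny has finite second moments, hence so do the offspring of $(B_m)$, so the ($X\log X$) moment condition holds and the Kesten--Stigum theorem gives $B_m\gamma^{-m}\to\zeta\,w$ almost surely, with $w$ the left Perron eigenvector and $\zeta>0$ almost surely; a second-moment/Borel--Cantelli estimate upgrades this to $|A_m|\gamma^{-m}\to c>0$ almost surely, whence $|\mathsf{R}_k^i|=\sum_{m<k}|A_m|+O(1)\sim\frac{c}{\gamma-1}\gamma^k$, and the geometric sum of the first step gives $\frac{|\mathsf{R}_k^i|}{\tau_k^i}\to\frac{1}{2}(1-\frac{1}{\gamma})$. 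If $\gamma=1$: the $B_m$ have bounded size, $|A_m|$ is asymptotically stationary, so $|\mathsf{R}_k^i|\sim\mu k$ by the ergodic theorem, $\tau_k^i\sim\mu k^2$, and the limit is $0$.

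The step I expect to be the real obstacle is the branching description. The layers $A_m$ themselves do \emph{not} form a branching process, because conditioning on the shape of $A_m$ already determines the rotor at each of its vertices (the rotor at $v$ equals $d_{\tau(v)}$ minus the number of children of $v$ lying in $A_m$), so the number of bad children of an $A_m$-vertex is not a fresh random variable. The way around this --- tracking only the bad-children sets $B_m$ and regrowing the trees of good children from the $B_m$ using rotors that have not been inspected before, so that the branching genuinely decouples --- together with a careful proof of the excursion-index identity $e(x)=w(x)+1$ and its bookkeeping at the root and the sink, is where the combinatorial care must go. The algebraic identity for $\gamma$, the verification that $(B_m)$ is positive-regular (after possibly restricting to the types that occur as bad children) and satisfies the $X\log X$ condition, and the passage from Kesten--Stigum for $B_m$ to an almost-sure limit for $|\mathsf{R}_k^i|$ and then for $\tau_k^i$, are routine.
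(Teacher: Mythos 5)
Your proposal is correct and follows essentially the same route as the paper: your bad-children process $(B_m)$ is exactly the paper's leaf process $\partial_o\mathsf{R}^i_k$, your mean matrix $(I-M)^{-1}(D-M)$ is conjugate to the paper's $I+(D-I)(I-M)^{-1}$ (hence has the same spectral radius $\gamma$), the moment condition for Kesten--Stigum is supplied in both cases by the finiteness of the second moments of the total progeny of the subcritical good-children MBP, and the final passage from $\gamma^{-k}$-growth to the limit $\frac{1}{2}\left(1-\frac{1}{\gamma}\right)$ is the same telescoping/geometric-sum argument. The only cosmetic differences are that you describe the layers statically via the count $w(x)$ of bad ancestors rather than dynamically via the depth-first search between consecutive returns to the sink, and that you sum the layer sizes $|A_m|$ directly where the paper runs the equivalent recursion for $\bm{\tau}_k/\gamma^k$.
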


In order to prove Theorem \ref{thm:lln_direct_cover}, we first look at the
size of the tree $\Tgood$ of good children, tree which, in view of $\rho < 1$, dies out almost surely.
Let $\mathbf{Y} = \sum_{\mathsf{n}=0}^\infty \mathsf{Z}_{\mathsf{n}}$
be the vector valued random variable counting the total number
of vertices of $\Tgood$, separately for each type.
Let $\mathbf{F}(\mathbf{z}) = \big(F^1(\mathbf{z}),\ldots,F^\mathsf{N}(\mathbf{z})\big)$ with
\begin{equation*}
F^i(\mathsf{z}) = \sum_{s_1,\ldots,s_{\mathsf{N}}\geq 0}
 \Pb\big[\mathbf{Y} = (s_1,\ldots,s_{\mathsf{N}})\lvert \mathsf{Z}_0 = \mathbf{e}_i\big] z_1^{s_1}\dots z_{\mathsf{N}}^{s_{\mathsf{N}}},
\end{equation*}
be the generating function of $\mathbf{Y}$. It follows (see e.g. \cite[13.2]{harris_book} for the case of Galton-Watson trees with just one type) that $F^i(\mathsf{z})$ satisfies the following functional equation
\begin{equation}
\label{eq:Y_functional_equation}
F^i(\mathbf{z}) = z_i\cdot f^i\big(\mathbf{F}(\mathbf{z})\big) =
                  z_i\cdot f^i\big(F^1(\mathbf{z}),\dots, F^{\mathsf{N}}(\mathbf{z})\big),
\end{equation}
for all $i\in\{1,\dots\mathsf{N}\}$, where $f(\mathsf{z})=(f^1(\mathsf{z}),\ldots,f^{\mathsf{N}}(\mathsf{z}))$ is the generating function of the MBP $\mathsf{Z}_n$, as defined in \eqref{eq:MBP_generating_function}.
Let now $V = \big(v_{ij}\big)_{i,j=1,\dots,\mathsf{N}}$ be the first
moment matrix of $\mathbf{Y}$, that is,
\begin{equation*}
v_{ij} = \E[\mathbf{Y}_j \lvert \mathsf{Z}_0 = \mathbf{e}_i\big] = \left.\frac{\partial F^i(\mathbf{z})}{\partial z_j} \right\rvert_{\mathbf{z}=\mathbf{1}}.
\end{equation*}
\begin{lemma}
\label{lem:direct_cover_total_size}
We have $V = (I-M)^{-1}$ where $I$ is the identity matrix, and the spectral radius of $V$ is given by
\begin{equation}
\rho(V) = \frac{1}{1-\rho(M)} > 1.
\end{equation}
\end{lemma}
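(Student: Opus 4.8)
The plan is to differentiate the functional equation \eqref{eq:Y_functional_equation} at $\mathbf{z}=\mathbf{1}$ and solve the resulting linear system for $V$. First I would note that since $\rho(M)<1$, the branching process dies out almost surely (Theorem \ref{thm:surv/extinc}), so $\mathbf{Y}$ is finite a.s.; moreover $\E[\mathbf{Y}]<\infty$ componentwise because $\E[\mathbf{Y}_j\mid\mathsf{Z}_0=\mathbf{e}_i]=\sum_{\mathsf{n}\geq 0}\E[\mathsf{Z}_{\mathsf{n},j}\mid\mathsf{Z}_0=\mathbf{e}_i]=\sum_{\mathsf{n}\geq 0}(M^{\mathsf{n}})_{ij}$, and the Neumann series $\sum_{\mathsf{n}\geq 0}M^{\mathsf{n}}$ converges precisely when $\rho(M)<1$, with sum $(I-M)^{-1}$. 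This already identifies $V=(I-M)^{-1}$, but to stay in the spirit of the generating-function approach I would also derive it from \eqref{eq:Y_functional_equation}: applying $\partial/\partial z_j$ to $F^i(\mathbf{z})=z_i f^i(\mathbf{F}(\mathbf{z}))$ via the chain rule gives
\begin{equation*}
\frac{\partial F^i}{\partial z_j}(\mathbf{z}) = \delta_{ij} f^i(\mathbf{F}(\mathbf{z})) + z_i\sum_{l=1}^{\mathsf{N}} \frac{\partial f^i}{\partial z_l}(\mathbf{F}(\mathbf{z}))\,\frac{\partial F^l}{\partial z_j}(\mathbf{z}).
\end{equation*}
Evaluating at $\mathbf{z}=\mathbf{1}$ and using $\mathbf{F}(\mathbf{1})=\mathbf{1}$ (since $\mathbf{Y}$ is a.s.\ finite) together with $f^i(\mathbf{1})=1$ and $\partial f^i/\partial z_l|_{\mathbf{1}}=m_{il}$, this becomes $v_{ij}=\delta_{ij}+\sum_l m_{il} v_{lj}$, i.e.\ $V=I+MV$, hence $(I-M)V=I$ and $V=(I-M)^{-1}$, the inverse existing because $\rho(M)<1$ means $1$ is not an eigenvalue of $M$.

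For the spectral radius statement I would invoke the Perron--Frobenius theorem: the MBP is positive regular, so some power $M^{(n)}$ is strictly positive, hence $M$ is irreducible with nonnegative entries, and $V=(I-M)^{-1}=\sum_{\mathsf{n}\geq 0}M^{\mathsf{n}}$ is then a strictly positive matrix (indeed $I+M+\cdots$ picks up the strictly positive term $M^{(n)}$). Let $w>0$ be the Perron right eigenvector of $M$ with $Mw=\rho(M)w$. Then $Vw=\sum_{\mathsf{n}\geq 0}M^{\mathsf{n}}w=\sum_{\mathsf{n}\geq 0}\rho(M)^{\mathsf{n}}w=\frac{1}{1-\rho(M)}w$, so $\frac{1}{1-\rho(M)}$ is an eigenvalue of $V$ with a strictly positive eigenvector; by the uniqueness clause of Perron--Frobenius (a nonnegative irreducible matrix has a strictly positive eigenvector only for its spectral radius) this forces $\rho(V)=\frac{1}{1-\rho(M)}$. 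Finally $\rho(M)<1$ gives $0<1-\rho(M)<1$, so $\rho(V)>1$.

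I do not expect a serious obstacle here; the only point requiring a little care is the justification that $\mathbf{F}(\mathbf{1})=\mathbf{1}$ and that the formal term-by-term differentiation of the power series $F^i$ is legitimate at the boundary point $\mathbf{z}=\mathbf{1}$ — this is exactly where finiteness of the first moments $v_{ij}$ (equivalently $\rho(M)<1$) is used, via Abel's theorem / monotone convergence for power series with nonnegative coefficients. The alternative, more elementary route through the Neumann series $V=\sum_{\mathsf{n}\geq 0}M^{\mathsf{n}}$ sidesteps even that, so the lemma is robust; I would present the functional-equation derivation as the main argument since it matches the generating-function methodology of the paper, and mention the Neumann-series identity as the source of the strict positivity of $V$.
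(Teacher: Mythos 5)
Your proposal is correct and follows essentially the same route as the paper: differentiating the functional equation \eqref{eq:Y_functional_equation} at $\mathbf{z}=\mathbf{1}$ to obtain $V=I+MV$ and hence $V=(I-M)^{-1}$. The only difference is that you spell out the spectral-radius identity via the Perron eigenvector and the Neumann series, a step the paper simply declares to follow immediately; your justification is valid and arguably more complete.
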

\begin{proof}
Differentiating the functional equation \eqref{eq:Y_functional_equation}  and using that $f^i(\mathbf{1}) = 1$ and $\mathbf{F}(\mathbf{1}) = \mathbf{1}$ gives
\begin{align*}
v_{ij} &= \left. \left\{z_i \sum_{a=1}^{\mathsf{N}} \frac{\partial f^i}{\partial F^a}\big(\mathbf{F}(\mathbf{z})\big)\frac{\partial F^a}{\partial z_j}(\mathbf{z}) 
+ \delta_{ij} \cdot f^i\big(\mathbf{F}(\mathbf{z})\big)
\right\}\right\lvert_{\mathbf{z}=\mathbf{1}} \\
 &= \sum_{a=1}^{\mathsf{N}} \left.\frac{\partial f^i}{\partial z_a}(\mathbf{z})\right\lvert_{\mathbf{z}=\mathbf{1}}\left.\frac{\partial F^a}{\partial z_j}(\mathbf{z})\right\lvert_{\mathbf{z}=\mathbf{1}} 
 + \delta_{ij} \\
 &= \sum_{a=1}^{\mathsf{N}} m_{ia} v_{aj} + \delta_{ij}.
\end{align*}
Thus we get the matrix equality
$V = M\cdot V + I$, where $I$ is the identity matrix, and this implies that
\begin{equation*}
V = (I-M)^{-1}.
\end{equation*}
The inverse of $I-M$ exists, since by assumption $\rho(M) < 1$, and the 
equation for the spectral radius follows immediately.
\end{proof}
We next look at the mixed second moments of $\mathbf{Y}$, for which we prove the following.
\begin{lemma}
\label{lem:direct_cover_total_size_second_moment}
$\E\big[\mathbf{Y}_j \mathbf{Y}_k \lvert \mathsf{Z}_0 = \mathbf{e}_i] < \infty$ for all $i,j,k \in \{1,\dots,\mathsf{N} \}$.
\end{lemma}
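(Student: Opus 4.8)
The plan is to differentiate the functional equation \eqref{eq:Y_functional_equation} twice, evaluate at $\mathbf{z}=\mathbf{1}$, and obtain a linear system for the mixed second moments $w_{jk}^i := \E\big[\mathbf{Y}_j\mathbf{Y}_k \mid \mathsf{Z}_0 = \mathbf{e}_i\big]$, whose coefficient matrix is invertible precisely because $\rho(M)<1$. Concretely, starting from $F^i(\mathbf{z}) = z_i\, f^i(\mathbf{F}(\mathbf{z}))$, applying $\partial^2/\partial z_j\partial z_k$ and using the chain rule will produce terms of three kinds: a term linear in the unknown second moments $w^a_{jk}$ with coefficients $m_{ia}$ (coming from $z_i \sum_a \tfrac{\partial f^i}{\partial F^a} \tfrac{\partial^2 F^a}{\partial z_j\partial z_k}$), a term involving the second moments $\sigma^i_{ab}$ of the MBP multiplied by the already-known first moments $v_{aj}, v_{bk}$ of $\mathbf{Y}$ (from $z_i\sum_{a,b}\tfrac{\partial^2 f^i}{\partial F^a\partial F^b}\tfrac{\partial F^a}{\partial z_j}\tfrac{\partial F^b}{\partial z_k}$), and boundary terms from the $\delta_{ij}$ factor (from differentiating the explicit $z_i$ prefactor), which only involve $m$'s and $v$'s. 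Schematically this yields
\begin{equation*}
w^i_{jk} = \sum_{a=1}^{\mathsf{N}} m_{ia}\, w^a_{jk} + C^i_{jk},
\end{equation*}
where $C^i_{jk}$ is an explicit finite quantity built from the finite data $m_{ia}$, $\sigma^i_{ab}$ (finite by the remark at the end of the preliminaries), and $v_{aj} = \big((I-M)^{-1}\big)_{aj}$ (finite by Lemma \ref{lem:direct_cover_total_size}). Reading this as $\mathbf{w}_{jk} = M\,\mathbf{w}_{jk} + \mathbf{C}_{jk}$ for the vector $\mathbf{w}_{jk} = (w^1_{jk},\dots,w^{\mathsf{N}}_{jk})$, the solution is $\mathbf{w}_{jk} = (I-M)^{-1}\mathbf{C}_{jk}$, which exists and is finite since $\rho(M)<1$; this gives the claim.

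The one genuine subtlety — and the step I'd treat most carefully — is justifying that differentiation under the sum/limit is legitimate at $\mathbf{z}=\mathbf{1}$, i.e. that the second derivatives of $F^i$ at $\mathbf{1}$ actually equal the second moments rather than being merely formal. Because we are on the boundary of the domain of the power series, one cannot just invoke smoothness; the standard fix is a truncation/monotonicity argument. I would first run the whole computation on finite truncations $F^i_{\leq n}$ corresponding to the tree of good children cut at generation $n$ (equivalently, work with $\mathsf{Y}^{(n)} = \sum_{m=0}^n \mathsf{Z}_m$), for which all generating functions are polynomials and every manipulation is unconditionally valid, obtaining $\mathbf{w}^{(n)}_{jk} = M\,\mathbf{w}^{(n)}_{jk} + \mathbf{C}^{(n)}_{jk}$ with $\mathbf{C}^{(n)}_{jk}$ built from the truncated first moments $v^{(n)}_{aj}$. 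Since $\mathsf{Y}^{(n)}_j \uparrow \mathsf{Y}_j$ pointwise, monotone convergence gives $v^{(n)}_{aj}\uparrow v_{aj}<\infty$ and hence $\mathbf{C}^{(n)}_{jk}\uparrow \mathbf{C}_{jk}<\infty$, while $\mathbf{w}^{(n)}_{jk} = (I-M)^{-1}\mathbf{C}^{(n)}_{jk}$ — here I use that $(I-M)^{-1} = \sum_{\ell\geq 0} M^\ell$ has nonnegative entries, so the right-hand side is monotone in $n$ and bounded above by $(I-M)^{-1}\mathbf{C}_{jk}$. Therefore $w^i_{jk} = \lim_n w^{(n),i}_{jk} = \E[\mathsf{Y}_j\mathsf{Y}_k\mid\mathsf{Z}_0=\mathbf{e}_i] \leq \big((I-M)^{-1}\mathbf{C}_{jk}\big)_i < \infty$, again by monotone convergence, which is exactly the assertion of the lemma.

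Alternatively — and this may be the cleaner write-up — one can bypass generating functions entirely and argue probabilistically: decompose $\mathbf{Y} = \mathbf{e}_i + \sum_{v\text{ child of root}} \mathbf{Y}^{(v)}$, where conditionally on the root having offspring vector $\mathsf{Z}_1$ the $\mathbf{Y}^{(v)}$ are independent copies of the total-progeny vectors rooted at the children's types. Expanding $\E[\mathbf{Y}_j\mathbf{Y}_k\mid \mathsf{Z}_0=\mathbf{e}_i]$ and conditioning on $\mathsf{Z}_1$ produces the same recursion, with the diagonal ($v=v'$) contributions feeding $\sum_a m_{ia} w^a_{jk}$ plus a finite correction, and the off-diagonal ($v\neq v'$) contributions feeding the $\sigma^i_{ab}v_{aj}v_{bk}$ term, which is finite because $\sigma^i_{ab}<\infty$ and $v_{aj},v_{bk}<\infty$. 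Either way, the essential inputs are: finiteness of the MBP's own second moments $\sigma^i_{ab}$, finiteness of $V=(I-M)^{-1}$ from Lemma \ref{lem:direct_cover_total_size}, and invertibility/nonnegativity of $(I-M)^{-1}$ under $\rho(M)<1$. I expect the bookkeeping of the constant term $C^i_{jk}$ to be the only laborious part, and it is routine.
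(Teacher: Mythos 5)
Your proposal follows essentially the same route as the paper: differentiate the functional equation $F^i(\mathbf{z}) = z_i f^i(\mathbf{F}(\mathbf{z}))$ twice, evaluate at $\mathbf{z}=\mathbf{1}$ to obtain the linear system $\xi^i_{jk} = \sum_a m_{ia}\xi^a_{jk} + (\text{finite terms in } m, \sigma, v)$, and invert via $(I-M)^{-1}$ using $\rho(M)<1$; this is exactly equation \eqref{eq:xi_equation} and the matrices $S_k = (I-M)^{-1}\Gamma_k$ in the paper. Your additional truncation/monotone-convergence justification for differentiating at the boundary point $\mathbf{z}=\mathbf{1}$ is a legitimate extra precaution that the paper's proof passes over silently, but it does not change the substance of the argument.
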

\begin{proof}
Let
\begin{equation*}\xi^i_{jk} = \E\big[\mathbf{Y}_j \mathbf{Y}_k \lvert \mathsf{Z}_0 = \mathbf{e}_i] = \left.\frac{\partial^2 F^i(z_1,\ldots,z_{\mathsf{N}})}{\partial z_j\partial z_k}\right\rvert_{\mathbf{z}=\mathbf{1}}.
\end{equation*}
From the functional equation \eqref{eq:Y_functional_equation} we get
\begin{align*}
\xi^i_{jk} &= \left. \frac{\partial}{\partial z_k}\left\{z_i \sum_{a=1}^{\mathsf{N}} \frac{\partial f^i}{\partial F^a}\big(\mathbf{F}(\mathbf{z})\big)\frac{\partial F^a}{\partial z_j}(\mathbf{z}) 
+ \delta_{ij} \cdot f^i\big(\mathbf{F}(\mathbf{z})\big)
\right\}\right\lvert_{\mathbf{z}=\mathbf{1}} \\
&= \left\{\sum_{a=1}^{\mathsf{N}} \frac{\partial f^i}{\partial F^a}\big(\mathbf{F}(\mathbf{z})\big)\left(\delta_{ij}\frac{\partial F^a}{\partial z_j}(\mathbf{z}) + z_i\frac{\partial^2 F^a}{\partial z_j\partial z_k}(\mathbf{z}) + \delta_{ik}\frac{\partial F^a}{\partial z_k}(\mathbf{z})\right)\right. \\
&\phantom{=}\;\;\;\left.\left.+ z_i\sum_{a,b=1}^{\mathsf{N}}\frac{\partial^2 f^i}{\partial F^a \partial F^b}\big(\mathbf{F}(\mathbf{z})\big)
\frac{\partial F^a}{\partial z_j}(\mathbf{z})\frac{\partial F^b}{\partial z_k}(\mathbf{z})\right\}\right\lvert_{\mathbf{z}=\mathbf{1}}.
\end{align*}
Using the fact that $\mathbf{F}(\mathbf{1}) = \mathbf{1}$, the above equation simplifies to
\begin{align}
\label{eq:xi_equation}
\xi^i_{jk} &= \sum_{a=1}^{\mathsf{N}}m_{ia}\big(\delta_{ik} v_{aj} + \xi^a_{jk} + \delta_{ij} v_{ak}\big) + 
\sum_{a,b=1}^{\mathsf{N}} \sigma^i_{ab} v_{aj} v_{bk}.
\end{align}
For each $k$, we introduce the matrices $S_k = \big(\xi^i_{jk}\big)_{i,j = 1,\dots,\mathsf{N}}$ and $\Gamma_k = \big(\gamma^i_{jk}\big)_{i,j = 1,\dots,\mathsf{N}}$,
where 
\begin{equation*}
\gamma^i_{jk} = \sum_{a=1}^{\mathsf{N}}m_{ia}\big(\delta_{ik} v_{aj} + \delta_{ij} v_{ak}\big) + 
\sum_{a,b=1}^{\mathsf{N}} \sigma^i_{ab} v_{aj} v_{bk} < \infty.
\end{equation*}
From \eqref{eq:xi_equation} it follows that $S_k = M S_k + \Gamma_k$ and thus $S_k = (I-M)^{-1} \Gamma_k$. In particular $\xi^i_{jk} < \infty$ for all $i,j,k$, which concludes the proof.
\end{proof}

Since by Lemma \ref{lem:direct_cover_total_size} also the first moment of the
total population size exists, we get following.
\begin{corollary}
\label{cor:total_size_variance}
$\Var(\mathbf{Y}|\mathsf{Z}_0 = \mathbf{e}_i) < \infty$.
\end{corollary}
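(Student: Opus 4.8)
The plan is to read off the corollary directly from the two preceding lemmas, since it is a bookkeeping statement about the first and second moments of $\mathbf{Y}$. First I would unwind the notation: writing $\mathbf{Y}=(\mathbf{Y}_1,\dots,\mathbf{Y}_{\mathsf{N}})$ for the type-indexed vector counting the total population of $\Tgood$, the quantity $\Var(\mathbf{Y}\mid \mathsf{Z}_0=\mathbf{e}_i)$ is to be understood as the $\mathsf{N}\times\mathsf{N}$ covariance matrix whose $(j,k)$ entry equals
\begin{equation*}
\E\big[\mathbf{Y}_j\mathbf{Y}_k\mid\mathsf{Z}_0=\mathbf{e}_i\big]-\E\big[\mathbf{Y}_j\mid\mathsf{Z}_0=\mathbf{e}_i\big]\,\E\big[\mathbf{Y}_k\mid\mathsf{Z}_0=\mathbf{e}_i\big]=\xi^i_{jk}-v_{ij}v_{ik}.
\end{equation*}
(If one instead prefers to read $\Var(\mathbf{Y}\mid\cdots)$ as the scalar $\Var\big(\sum_{j}\mathbf{Y}_j\mid\cdots\big)=\sum_{j,k}(\xi^i_{jk}-v_{ij}v_{ik})$, the argument is unchanged.)

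Next I would simply combine the two lemmas. By Lemma \ref{lem:direct_cover_total_size}, the hypothesis $\rho(M)<1$ guarantees that $I-M$ is invertible and that $V=(I-M)^{-1}$ has finite entries, so $v_{ij}=\E[\mathbf{Y}_j\mid\mathsf{Z}_0=\mathbf{e}_i]<\infty$ for all $i,j$. By Lemma \ref{lem:direct_cover_total_size_second_moment}, $\xi^i_{jk}=\E[\mathbf{Y}_j\mathbf{Y}_k\mid\mathsf{Z}_0=\mathbf{e}_i]<\infty$ for all $i,j,k$; taking $j=k$ in particular shows that the second moments $\E[\mathbf{Y}_j^2\mid\mathsf{Z}_0=\mathbf{e}_i]=\xi^i_{jj}$ are finite, which is exactly what is needed for the covariance matrix to be well defined. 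Hence every entry $\xi^i_{jk}-v_{ij}v_{ik}$ of the covariance matrix is a difference of two finite real numbers, so it is finite, and the corollary follows.

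There is no genuine obstacle here: the corollary is a repackaging of Lemmas \ref{lem:direct_cover_total_size} and \ref{lem:direct_cover_total_size_second_moment}, and the only point worth making explicit is that finiteness of all the mixed second moments $\E[\mathbf{Y}_j\mathbf{Y}_k]$ together with finiteness of the means is precisely what forces all covariances — and in particular all variances $\Var(\mathbf{Y}_j\mid\mathsf{Z}_0=\mathbf{e}_i)=\xi^i_{jj}-v_{ij}^2$ — to be finite. If one wants a quantitative statement, one can read $\xi^i_{jj}$ off the formula $S_k=(I-M)^{-1}\Gamma_k$ from the proof of Lemma \ref{lem:direct_cover_total_size_second_moment}, giving the explicit bound $\Var(\mathbf{Y}_j\mid\mathsf{Z}_0=\mathbf{e}_i)\le\xi^i_{jj}=\big((I-M)^{-1}\Gamma_j\big)_{ij}$.
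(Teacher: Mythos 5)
Your proposal is correct and matches the paper's (implicit) argument exactly: the paper derives the corollary in one line from Lemma \ref{lem:direct_cover_total_size} (finite first moments) and Lemma \ref{lem:direct_cover_total_size_second_moment} (finite mixed second moments), via $\Var = \xi^i_{jk}-v_{ij}v_{ik}$. Your version just spells out the bookkeeping more explicitly.
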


\begin{definition}
For a finite subset  $G\subset \mathsf{T}_i$ of the vertex set of $\mathsf{T}_i$,
the (multitype) cardinality of $G$ is defined as $\# G = (g_1,\dots,g_\mathsf{N})$,
where $g_k = \#\big\{v\in G:\: v \text{ has type } k\big\}$.
\end{definition}

\begin{definition}
Let $G$ be a connected subset of $\mathsf{T}_i$ containing the root. Denote
by $\partial_oG$ the set of leaves of $G$, that is,
$\partial_oG = \big\{ w \in \mathsf{T}_i\setminus G:\: \exists v\in G \text{ s.t. } v\sim w\big\}$. 
\end{definition}
We will use the following simple fact.
\begin{lemma}
\label{lem:leaves_direct_cover}
Let $G$ be a finite connected subset of $\mathsf{T}_i$ containing the root. Then
\begin{equation*}
\#\partial_oG = (D-I)\cdot \# G + \mathsf{e}_i.
\end{equation*}
\begin{proof}
Let $D = \big(d_{kl}\big)_{k,l=1,\ldots,\mathsf{N}}$ be the adjacency matrix of the finite graph which generates the periodic tree $\mathsf{T}_i$. Let $H$ be the set of children of all vertices
in $G$. By the definition of periodic trees, any vertex of type $k$ has $d_{kl}$ children of type $l$, hence $\# H = D\cdot \# G$.  We can then recover the set of leaves of $G$ by 
$$\partial_oG = H \setminus G \cup \{\text{root of }\mathsf{T}_i\},$$
and the statement of the lemma  follows immediately.
\end{proof}
\end{lemma}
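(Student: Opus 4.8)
The plan is to reduce the claim to elementary set-theoretic bookkeeping on the tree, using crucially that $G$ is connected and contains the root $r$. The first step is to describe $\partial_o G$ concretely. Since $G$ is connected with $r\in G$, the unique path in $\mathsf{T}_i$ from any $v\in G$ to $r$ must lie entirely inside $G$; in particular the ancestor $v^{(0)}$ of every $v\in G\setminus\{r\}$ is again in $G$. Hence, if $w\in\mathsf{T}_i\setminus G$ is adjacent to some $v\in G$, then $w$ cannot be $v^{(0)}$ --- that vertex belongs to $G$ when $v\neq r$, and the ``ancestor'' of $r$ is the sink $o$, which is not counted in $\partial_o G$ --- so $w$ must be a \emph{child} of $v$. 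Conversely every child of a vertex of $G$ that is not itself in $G$ is adjacent to $G$ and outside $G$, hence lies in $\partial_o G$. Writing $H$ for the set of all children of all vertices of $G$, this shows $\partial_o G = H\setminus G$, and therefore $\#\partial_o G = \#H - \#(H\cap G)$.

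The second step computes $\#H$. Because $\mathsf{T}_i$ is a tree, distinct vertices have disjoint sets of children, so $H$ is the disjoint union of the child-sets of the vertices of $G$; since a vertex of type $k$ has exactly $d_{kl}$ children of type $l$ in the directed cover, passing to multitype cardinalities gives $\#H = D\cdot \#G$. The third step computes $\#(H\cap G)$: a vertex $v\in G$ lies in $H$ precisely when its ancestor $v^{(0)}$ lies in $G$, which by the first step holds for all $v\in G\setminus\{r\}$ and fails for $v=r$. Thus $H\cap G = G\setminus\{r\}$, and since $r$ has type $i$ we get $\#(H\cap G) = \#G - \mathsf{e}_i$. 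Combining,
\begin{equation*}
\#\partial_o G = \#H - \#(H\cap G) = D\cdot\#G - \big(\#G - \mathsf{e}_i\big) = (D-I)\cdot\#G + \mathsf{e}_i .
\end{equation*}

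I do not anticipate a real obstacle: this is a counting lemma. The only points that need a little care are the accounting at the root --- the extra $\mathsf{e}_i$ arises exactly because the ancestor of $r$ is the sink rather than a vertex of $G$, so $r$ alone among the vertices of $G$ fails to be a child of a $G$-vertex --- and the fact that the identity $\#H = D\cdot\#G$ involves no double counting, which is precisely where the tree structure of $\mathsf{T}_i$ (as opposed to that of a general graph) is used.
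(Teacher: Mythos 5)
Your proof is correct and follows essentially the same route as the paper's: introduce the set $H$ of children of $G$-vertices, note $\#H = D\cdot\#G$ by the periodic structure, and account for the root separately. If anything, your bookkeeping at the root (via $H\cap G = G\setminus\{r\}$) is stated more carefully than the paper's displayed set identity $\partial_o G = H\setminus G\cup\{\text{root}\}$, which taken literally would wrongly place the root in $\partial_o G$; both arguments yield the same count.
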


We are now ready to prove Theorem \ref{thm:lln_direct_cover}.

\begin{proof}[Proof of Theorem \ref{thm:lln_direct_cover}]
Write $\mathsf{R}^{i}_k = R^{i}_{\tau^{i}_k}$ for the range of $(X_n^i)$ up to time $\tau_k^i$.
From \cite{huss_sava_trans_directed_covers}, we know that $\mathsf{R}^i_{1}$ is a
subcritical multitype Galton-Watson tree with first moment matrix $M$ of the MBP with offspring probabilities as given in \eqref{eq:mbp}.
At time $\tau^{i}_k$ the
rotor walk is at the sink, all rotors in the visited set $\mathsf{R}^i_k$ point
towards the root, while all other rotors are still in their initial $\mathcal{D}$-distributed random configuration.
Thus between times $\tau^{i}_k$ and $\tau^{i}_{k+1}$ the rotor walk performs
a depth first search of $\mathsf{R}^{i}_k \cup \partial_o \mathsf{R}^{i}_k$ visiting every child of each vertex from right to left. Whenever it reaches a leaf vertex $v\in \partial_o \mathsf{R}^{i}_k$ of type $l$,
it performs an independent a.s. finite excursion starting at $v$, excursion which has the
same distribution as $\mathsf{R}^{l}_1$.
Let $L^{i}_k = \partial_o \mathsf{R}^{i}_k$. Then, by Lemma \ref{lem:leaves_direct_cover} we have 
\begin{equation}
\label{eq:L_k}
\# L^{i}_k = (D-I)\cdot\# \mathsf{R}^{i}_k + \mathbf{e}_i.
\end{equation}
Let $\mathbf{L}_0 = \mathbf{e}_i$ if the root of the tree is of type $i$, and
let $\mathbf{L}_k=\# L^{i}_k$ be the (multitype)-cardinality of the leaves of the range after the $k$-th return to the sink, that is, the $j$-th entry in $\mathbf{L}_k$ represents the number of leaves of type $j$ in $\partial_o\mathsf{R}^{i}_k$. 
By Lemma \ref{lem:direct_cover_total_size}, we get
\begin{equation*}
\E\big[\mathbf{L}_{1,j} \lvert \mathbf{L}_0 = \mathbf{e}_i\big] = 
\big(I + (D-I)(I-M)^{-1}\big)_{ji}.
\end{equation*}
It follows that $\big(\mathbf{L}_k\big)_{k\geq 0}$ is a multitype Galton-Watson
process with first moment matrix 
\begin{equation}\label{eq:fmm-mbp}
\Gamma = \big(I + (D-I)(I-M)^{-1}\big)^{T}.
\end{equation}

We also need to check the finiteness of the second cross moments.
Let $C_i = \big(c^i_{jl}\big)_{j,l=1,\ldots,\mathsf{N}}$ be the matrix of second cross moments
$$ c^i_{jl} = \E\big[\mathbf{L}_{1,j}\mathbf{L}_{1,l}\lvert \mathbf{L}_0 = \mathbf{e}_i\big].$$

Since $\mathrm{Var}(\mathbf{L_1} \lvert \mathbf{L}_0 = \mathbf{e}_i) = C_i - \E\big[\mathbf{L}_1|\mathbf{L}_0 = \mathbf{e}_1\big] \E\big[\mathbf{L}_1^T|\mathbf{L}_0 = \mathbf{e}_1\big]$, and the first moments
of $\mathbf{L}_1$ exist by Lemma \ref{lem:direct_cover_total_size} and Lemma \ref{lem:leaves_direct_cover} it suffices to check the existence of the variance.
We have
\begin{align*}
\mathrm{Var}(\mathbf{L_1} \lvert \mathbf{L}_0 = \mathbf{e}_i\big) &=
\mathrm{Var}\big((D-I) \# \mathsf{R}^{i}_1 + \mathbf{e}_i\big) \\
&= (D-I)\mathrm{Var}\big(\# \mathsf{R}^{i}_1\big)(D-I)^T,
\end{align*}
which together with  Corollary \ref{cor:total_size_variance} implies that all matrix elements of $C_i$ are finite.
Let $\gamma > 1$ be the spectral radius of $\Gamma$ and $\mathbf{u} > 0$ be the 
corresponding Perron-Frobenius eigenvector. Since $C_i$ is
finite, Kesten-Stigum Theorem \cite{kesten1966} for the branching process $\mathbf{L}_k$ implies the existence of an almost surely positive random variable $W$ such that
\begin{equation}
\label{eq:L_convergence}
\gamma^{-k} \mathbf{L}_k \to W \mathbf{u},
\end{equation}
almost surely as $k\to\infty$.
Let $e = \{u,v\}$ where $u,v$ are vertices of the tree with $v\sim u$, be
an edge of the tree. The type $\iota(e)$ of the edge $e$ is defined as
the type of the endvertex of the edge which is further away from the sink.
For each time $n\geq 0$ we let $\bm{\psi}(n) \in \N_{\geq 0}^\mathsf{N}$ be the
vector of the number of edges of each type that are traversed by the rotor
walk up to time $n$. That is, $\bm{\psi}(n) = \big(\psi_1,\ldots,\psi_\mathsf{N}\big)$,
with
\begin{equation*}
\psi_l = \# \big\{l=1,\ldots,n:\: \iota(\{X^{i}_{l-1},X^{i}_l\}) = l\},
\end{equation*}
which satisfies $\lVert \bm{\psi}(n)\lVert_1 = n$.
Moreover, if we define $\bm{\tau}_k = \bm{\psi}(\tau_k)$, then for all $k\geq 1$
\begin{equation}
\label{eq:R_k}
\bm{\tau^{i}}_k - \bm{\tau^{i}}_{k-1} = 2 \# \mathsf{R}^{i}_k,
\end{equation}
which together with \eqref{eq:L_k} yields
\begin{align*}
(D-I)\big(\bm{\tau^{i}}_k - \bm{\tau^{i}}_{k-1}\big) + 2 \mathbf{e}_i = (D-I) 2\# \mathsf{R}^{i}_k  + \mathbf{e}_i = 2 \mathbf{L}_k,
\end{align*}
assuming the initial state $\mathbf{L}_0 = \mathbf{e}_i$ for the branching process.
Multiplying the previous equation by $\gamma^{-k}$ and using \eqref{eq:L_convergence} we get
\begin{align*}
(D-I)\left(\frac{\bm{\tau^{i}}_k}{\gamma^{k}} - \frac{1}{\gamma}\cdot\frac{\bm{\tau^{i}}_{k-1}}{\gamma^{k-1}}\right) + 2 \gamma^{-k}\mathbf{e}_i  = 2 \gamma^{-k} \mathbf{L}_k \to 2 W \mathbf{u},
\end{align*}
where the limit is almost surely as $k\to\infty$.
Writing $\bm{\sigma}_k = \frac{\bm{\tau}^{i}_k}{\gamma^k}$, the previous equation
reduces to
\begin{align*}
\bm{\sigma}_k - \frac{1}{\gamma}\bm{\sigma}_{k-1} \to 2 (D-I)^{-1} W \mathbf{u},
\end{align*}
where the limit vector $2(D-I)^{-1}W\mathbf{u}$ is almost surely positive. 
Denote by $\bm{\sigma}^\star$ the common limit of $\bm{\sigma}_k$ and $\bm{\sigma}_{k-1}$. It follows that
\begin{align*}
\bm{\sigma}^\star = \lim_{k\to\infty} \frac{\bm{\tau}^{i}_k}{\gamma^k}= 2 \left(1-\frac{1}{\gamma}\right)^{-1} (D-I)^{-1} W \mathbf{u} > 0.
\end{align*}
Therefore, also $\tau^{i}_k$ grows exponentially with rate $\gamma$, thus, the almost sure limit
\begin{equation}
\label{eq:tau_limit}
\lim_{k\to\infty}\frac{\tau^{i}_k}{\gamma^k} = \lim_{k\to\infty} \frac{\big\lVert\bm{\tau}^{i}_k\big\rVert_{1}}{\gamma^k},
\end{equation}
exists and is almost surely positive.
Now since $|\mathsf{R}^{i}_k|=\big|R^{i}_{\tau_k^i}\big| = \lVert \# \mathsf{R}^{i}_k\rVert_{1}$ the
total size of the range up to time of the $k$-th return $\tau_k^i$, from \eqref{eq:R_k} we get
$\tau^{i}_k - \tau^{i}_{k-1} = 2 |\mathsf{R}^{i}_k|$, which dividing by $\tau^{i}_k$ gives
\begin{equation*}
\frac{|\mathsf{R}^{i}_k|}{\tau^{i}_k}= \frac{1}{2}\left( 1 - \frac{\tau^{i}_{k-1}}{\tau^{i}_k} \right)= 
\frac{1}{2}\left( 1 - \frac{1}{\gamma}\cdot\frac{\tau^{i}_{k-1}}{\gamma^{k-1}}\cdot\frac{\gamma^k}{\tau^{i}_k} \right).
\end{equation*}
Using now \eqref{eq:tau_limit} we get the almost sure limit
\begin{equation*}
\lim_{k\to\infty}\frac{|\mathsf{R}^{i}_k|}{\tau^{i}_k}= \frac{1}{2}\left(1-\frac{1}{\gamma}\right),
\end{equation*}
which finishes the proof.
\end{proof}
The generalization of the previos result to all times is very similar to the similar result in case of regular trees handeled in \cite{huss_sava_range_speed_trees}.
For sake of completeness, we adapt the result to periodic trees.

\begin{proof}[Proof of Theorem \ref{thm:main-thm}.]
Write again $\mathsf{R}^{i}_k = R^{i}_{\tau^{i}_k}$ for the range of $(X_n^i)$ up to time $\tau_k^i$. For every $n$, let
\begin{equation*}
k=\max\{j: \tau_j^{i}< n\},
\end{equation*}
so that $\tau_k^i< n\leq \tau_{k+1}^{i}$ and $X_n^i\in \mathsf{R}_{k+1}^i:=R^{i}_{\tau_{k+1}}$. For each $k=1,2,\ldots,$ we partition the time intervals $(\tau_k^i,\tau_{k+1}^1]$ into finer intervals, on which the behavior of the range can be stronger controlled.
 Recall, from the proof of Theorem \ref{thm:lln_direct_cover}, that  $L_k^i=\partial_o\mathsf{R}_{k}^i$ represents the set of leaves of $\mathsf{R}_{k}^i$, and $\mathbf{L}_k=\#L_k^i$ represents the multitype cardinality of $L_k^i$, which is a multitype Galton-Watson process with first moment matrix $\Gamma$ as in \eqref{eq:fmm-mbp} and spectral radius $\gamma>1$.
We order the vertices in   $L_k^i=\partial_o\mathsf{R}_{k}^i=\{x_1,\ldots,x_{|L_k^i|}\}$ from right to left, and introduce the  
 the following two (finite) sequences of stopping times $(\eta_k(j))$ and $(\theta_k(j))$ of random length $|L_k^i|+1$, as following: let $\theta_k(0)=\tau_k^i$ and $\eta_k(|L_k^i|+1)=\tau^{i}_{k+1}$ and for $j=1,2,\ldots,|L_k^i|$
\begin{equation}
\label{eq:eta-theta}
\begin{aligned}
\eta_k(j)=&\min\{l> \theta_k(j-1):\ X^{i}_l=x_j\}\\
\theta_k(j)=&\min\{l>\eta_k(j):\ X^{i}_l=x_j\text{ and } \rho(X_l^i)=x_j^{d_{x_j}}\}
\end{aligned}
\end{equation}
That is, for each leaf $x_j$, the time $\eta_k(j)$ represents the first time the rotor walk reaches $x_j$, and $\theta_k(j)$ represents  the last time the rotor walk returns to $x_j$ after making a full excursion in the subtree rooted at $x_j$. Then
\begin{equation*}
(\tau_k^i,\tau^{i}_{k+1}]=\left\{\cup_{j=1}^{|L_k^i|+1}\big(\theta_k(j-1),\eta_k(j)\big]\right\}\cup \left\{\cup_{j=1}^{|L_k^i|}\big(\eta_k(j),\theta_k(j)\big]\right\}.
\end{equation*}
For leaves $x_j$ of type $l$, with $l=1,\ldots,\mathsf{N}$, the increments $(\theta_k(j)-\eta_k(j))$
are i.i.d random variables, and distributed according to $\tau^{l}_1$, which represents the time the rotor walk, started at the root of type $l$ of a periodic tree $\mathsf{T}_l$, needs to return to the sink for the first time. 
Once the rotor walk reaches the leaf $x_j$ for the first time at time $\eta_k(j)$, the subtree rooted at $x_j$ was never visited before by a rotor walk. Even more, the tree of good children with root $x_j$ is a subcritical multitype Galton-Watson tree, which dies out almost surely. Thus, the rotor walk on this subtree is recurrent, and it returns to $x_j$ at time $\theta_k(j)$ for the last time. Then  $\theta_k(j)-\eta_k(j)$ represents the length of this excursion which has expectation
 $\E[\tau^{l}_1]=2\E[|\mathsf{R}^l_1(x_j)|]$, and  $\mathsf{R}^l_1(x_j) $ has the same distribution as $\mathsf{R}_1^{l}$ which has the first moment matrix $M$, with $\rho(M)<1$, associated with transition probabilities \eqref{eq:mbp}.
In the time intervals $(\theta_k(j-1),\eta_k(j)]$, the rotor walk leaves the leaf $x_{j-1}$
and returns to the confluent between $x_{j-1}$ and $x_j$, from where it continues its journey until it reaches $x_j$.
Then $\eta_k(j)-\theta_k(j-1)$ is the time the rotor walk needs to reach the new leaf $x_j$ after leaving $x_{j-1}$. In this time intervals, the range does not change, since $(X^{i}_n)$ makes steps only in $\mathsf{R}^i_k$. 
Depending on the position of $X_n^i$, we shall distinguish two cases:

\emph{Case 1:} There exists a $j\in\{1,2,\ldots,|L_k^i|\}$ such that $n\in (\eta_k(j),\theta_k(j)]$.

\emph{Case 2:} There exists a $j\in\{1,2,\ldots,|L_k^i|+1\}$ such that $n\in (\theta_k(j-1),\eta_k(j)]$.

\emph{Case 1:} if $n\in (\eta_k(j),\theta_k(j)]$ for some $j$, then $\eta_k(j)<n\leq \theta_k(j)$
\begin{equation}\label{eq:case1-rec}
\dfrac{|R^{i}_{\eta_k(j)}|}{\theta_k(j)}\leq \dfrac{|R^{i}_n|}{n}\leq \dfrac{|R^{i}_{\theta_k(j)}|}{\eta_k(j)},
\end{equation}
and we show that, as $k\to \infty$, the difference $\dfrac{|R^{i}_{\theta(j)}|}{\eta(j)}-\dfrac{|R^{i}_{\eta(j)}|}{\theta(j)}\to 0$, almost surely. We use the following relations: for all $i=1,\ldots, \mathsf{N}$ and all $j=1,2,\ldots,|L_k^i|$, if the type of $x_j$ is $l\in \{1,\ldots, \mathsf{N}\} $
\begin{align*}
|R_{\theta_k(j)}^{i}|&=|R_{\eta_k(j)}^{i}|+|\mathsf{R}^l_1(x_j)|\\
\theta_k(j)& = \eta_k(j)+\tau^{l}_1(x_j),
\end{align*}
where $\tau^{l}_1(x_j)$ is a random variable which is distributed as $\tau_1^{l}$, which is finite almost surely. Moreover, $\mathsf{R}^l_1(x_j)$, represents the range of the rotor walk started at $x_j$ which has type $l$, until the first return to $x_j$; $\mathsf{R}^l_1(x_j)$ has the same distribution as $\mathsf{R}^l_1$, the range of the rotor walk started at the root $r$ of type $l$ of a periodic tree $\mathsf{T}_l$, until the first return $\tau^{l}_1$.
Then,
\begin{align*}
0&\leq \dfrac{|R^{i}_{\theta_k(j)}|}{\eta_k(j)}-\dfrac{|R^{i}_{\eta_k(j)}|}{\theta_k(j)}\\
&=\dfrac{\tau_1^{l}(x_j)|\mathsf{R}_1^{l}|}{\eta_k(j)(\eta_k(j)+\tau^{l}_1(x_j))}+\dfrac{|\mathsf{R}_1^{l}|}{\eta_k(j)+\tau^{l}_1(x_j)}+\dfrac{\tau_1^{l}(x_j)|R^{i}_{\eta_k(j)}|}{\eta_k(j)(\eta_k(j)+\tau^{l}_1(x_j))}\\
&\leq \dfrac{2|\mathsf{R}_1^{l}|}{\eta_k(j)}+\dfrac{\tau_1^{l}(x_j)|R^{i}_{\eta_k(j)}|}{(\eta_k(j))^2}.
\end{align*}
As $k\to\infty$, $\eta_k(j)\to \infty$, but $\tau^{l}_1(x_j)$ and $|\mathsf{R}_1^{l}|$ are finite, almost surely, therefore the first term in the last inequation above converges to $0$ almost surely, and we still have to prove  convergence to $0$ of the second term. But we know that $\tau_k^i<\eta_k(j)<\tau_{k+1}^{i}$, therefore
\begin{equation}\label{eq:r-eta-theta}
0\leq  \dfrac{\tau_1^{l}(x_j)|R^{i}_{\eta_k(j)}|}{(\eta_k(j))^2} \leq 
\frac{\tau_1^{l}(x_j)}{\tau_k^i}\cdot \left(\frac{|\mathsf{R}^i_{k}|}{\tau_k^i}+\frac{\sum_{l=1}^j(\theta_k(l)-\eta_k(l))}{2\tau_k^i}\right)\to 0,
\end{equation}
almost surely.
Since $\tau_1^{l}(x_j)$ is finite almost surely, and $\tau_k^i\to\infty$ as $k\to\infty$, it is clear that the 
first fraction goes to $0$, while $\frac{|\mathsf{R}_k^i|}{\tau_k^i}$ converges to $\frac{1}{2}(1-\frac{1}{\gamma})$ by Theorem \ref{thm:lln_direct_cover}. The last term in the equation above is bounded from above by $\frac{\tau_{k+1}^i-\tau_k^i}{2\tau_k^i}$ which in view of \eqref{eq:tauk-rk} and Theorem \ref{thm:lln_direct_cover} converges almost surely to $\gamma<\infty$ as $k\to\infty$. This shows that 
$$\dfrac{|R^{i}_{\theta_k(j)}|}{\eta_k(j)}-\dfrac{|R^{i}_{\eta_k(j)}|}{\theta_k(j)}\to 0,
\quad \text{almost surely, as } k\to\infty,$$ 
therefore there exists a number $\alpha_0=\lim\dfrac{|R^{i}_{\theta_k(j)}|}{\eta_k(j)}$ almost surely, and both the right and the left hand side in \eqref{eq:case1-rec} converge to $\alpha_0<\infty$ almost surely, which implies that $\frac{|R^{i}_n|}{n}$ converges to $\alpha_0$ almost surely, as well. But, along the subsequence $(\tau_k^i)$, we have from Theorem \ref{thm:lln_direct_cover}, that $\frac{|\mathsf{R}^i_k|}{\tau_k^i}\to \frac{1}{2}\left(1-\frac{1}{\gamma}\right)$ almost surely, as $k\to \infty$, therefore $\alpha_0=\frac{1}{2}\left(1-\frac{1}{\gamma}\right)$.

\emph{Case 2:} if $n\in (\theta_k(j-1),\eta_k(j)]$ for some $j\in\{1,2,\ldots,|L_k^i|+1\}$, and since in this time interval the rotor walk $X^{i}_n$ visits no new vertices and moves only in $\mathsf{R}^i_k$, we have
\begin{equation}\label{eq:case2}
\frac{|R^{i}_{\theta_k(j-1)}|}{n}=\frac{|R^i_n|}{n}=\frac{|R^{i}_{\eta_k(j)}|}{n}.
\end{equation}
In view of $\lim\dfrac{|R^{i}_{\theta_k(j)}|}{\eta_k(j)}\to \frac{1}{2}\left(1-\frac{1}{\gamma}\right)$
almost surely
and 
\begin{equation*}
\dfrac{|R^{i}_{\theta_k(j)}|}{\eta_k(j)}=\dfrac{|R^{i}_{\theta_k(j)}|}{\theta_k(j)-\tau_1^{l}(x_j)}=
\dfrac{|R^{i}_{\theta_k(j)}|}{\theta_k(j)}\frac{1}{1-\frac{\tau_1^{l}(x_j)}{\theta_k(j)}},
\end{equation*}
and $\frac{\tau_1^{l}(x_j)}{\theta_k(j)}\to 0$, it follows that $\dfrac{|R^{i}_{\theta_k(j)}|}{\theta_k(j)}$ converges almost surely to $\frac{1}{2}\left(1-\frac{1}{\gamma}\right)$. The same argument can be applied for the almost sure convergence of $\dfrac{|R^{i}_{\eta_k(j)}|}{\eta_k(j)}$ to $\frac{1}{2}\left(1-\frac{1}{\gamma}\right)$. Finally, in view of \eqref{eq:case2} 
\begin{equation*}
\dfrac{|R^{i}_{\eta_k(j)}|}{\eta_k(j)}\leq \dfrac{|R^{i}_{\eta_k(j)}|}{n}=\frac{|R^i_n|}{n}=\frac{|R^{i}_{\theta_k(j-1)}|}{n}\leq \frac{|R^{i}_{\theta_k(j-1)}|}{\theta_k(j-1)},
\end{equation*}
both the lower bound and the upper bound in the previous equation converge almost surely to $\frac{1}{2}\left(1-\frac{1}{\gamma}\right)$. Thus, also in this case $\frac{|R^{i}_n|}{n}$ converges almost surely to $\frac{1}{2}\left(1-\frac{1}{\gamma}\right)$, and the claim follows.
\end{proof}

\section{Palindromic trees}
If we look at periodic trees with a strong mirror symmetry we can give
a more geometric interpretation of the limit in Theorem \ref{thm:lln_direct_cover}. 
\begin{definition}
A periodic tree $T$ with production rule $\chi_i(k)$ is called palindromic if
the word $\big(\chi_i(1),\chi_i(2),\ldots,\chi_i(d_i)\big)$ is a palindrome
for all types $i\in\{1,\ldots,\mathsf{N}\}$, that is,
\begin{equation*}
\chi_i(k) = \chi_i(d_i + 1 - k),
\end{equation*}
for all $k\in\{1,\ldots, d_i\}$.
\end{definition}
Let $(X_n)$ be a rotor walk on $T$, and denote by $T^{\mathsf{good}}$ the tree of good children for $(X_n)$.

\begin{lemma}
\label{lem:a2m}
Let $T$ be a palindromic periodic tree, and $(X_n)$ a rotor walk with uniform initial rotor configuration. Let $D$ be the adjacency matrix of the graph which generates $T$ and $M$ the
first moment matrix of $T^{\mathsf{good}}$. Then
\begin{equation*}
D = 2 M.
\end{equation*}
\end{lemma}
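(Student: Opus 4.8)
The plan is to compute the entry $m_{ij}$ of the first moment matrix directly from its definition and compare it with $\tfrac12 d_{ij}$. Recall from \eqref{eq:good_children_first_moment_matrix} and \eqref{eq:mbp} that for a vertex of type $i$ whose rotor is in position $k$, the number of good children of type $j$ is $\mathfrak{C}_i^j(k) = \#\{l\in\{k+1,\ldots,d_i\}:\chi_i(l)=j\}$, and that the rotor position is distributed according to $\D_i$, here the uniform distribution on $\{0,1,\ldots,d_i\}$, assigning mass $\tfrac{1}{d_i+1}$ to each value. Hence
\begin{equation*}
m_{ij} = \E\big[\mathfrak{C}_i^j(\rho(x))\big] = \frac{1}{d_i+1}\sum_{k=0}^{d_i}\#\big\{l\in\{k+1,\ldots,d_i\}:\chi_i(l)=j\big\}.
\end{equation*}
First I would swap the order of summation: for each fixed $l\in\{1,\ldots,d_i\}$ with $\chi_i(l)=j$, the index $l$ is counted once for every $k\in\{0,\ldots,l-1\}$, i.e.\ exactly $l$ times. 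Therefore $m_{ij} = \frac{1}{d_i+1}\sum_{l:\chi_i(l)=j} l$.

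Next I would use the palindrome hypothesis. Since $\chi_i(l)=j$ if and only if $\chi_i(d_i+1-l)=j$, the set $S_j=\{l\in\{1,\ldots,d_i\}:\chi_i(l)=j\}$ is invariant under the involution $l\mapsto d_i+1-l$. Pairing each $l\in S_j$ with $d_i+1-l\in S_j$ (a fixed point of the involution would require $2l=d_i+1$, in which case that single term contributes $\tfrac{d_i+1}{2}$, which is consistent with the averaging below), we get
\begin{equation*}
\sum_{l\in S_j} l = \frac{1}{2}\sum_{l\in S_j}\big(l + (d_i+1-l)\big) = \frac{d_i+1}{2}\,\#S_j = \frac{d_i+1}{2}\,d_{ij},
\end{equation*}
using that $\#S_j = d_{ij}$ is precisely the number of children of type $j$ of a vertex of type $i$ in the directed cover. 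Substituting back, $m_{ij} = \frac{1}{d_i+1}\cdot\frac{d_i+1}{2}d_{ij} = \frac{1}{2}d_{ij}$, which is the claimed identity $D=2M$ entrywise.

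I do not expect a genuine obstacle here; the only points requiring a little care are the bookkeeping in the interchange of summation (making sure the multiplicity of each index $l$ is exactly $l$, which hinges on the rotor ranging over $\{0,\ldots,d_i\}$ inclusive so that there are $d_i+1$ equally likely positions) and the treatment of a possible central fixed point $l=(d_i+1)/2$ when $d_i$ is odd, which is handled uniformly by writing the sum symmetrically as above. One should also note explicitly that $\chi_i$ being a palindrome is exactly what makes the involution $l\mapsto d_i+1-l$ preserve each fiber $S_j$; without it the fibers could be permuted among types and the conclusion would fail.
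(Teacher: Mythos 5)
Your proposal is correct and follows essentially the same route as the paper: both reduce the claim to the identity $m_{ij}=\frac{1}{d_i+1}\sum_{l}l\,\indicator\{\chi_i(l)=j\}$ and then exploit the palindromic symmetry $l\mapsto d_i+1-l$ on the fiber $\{l:\chi_i(l)=j\}$. The only differences are that you derive the formula for $m_{ij}$ from the definition of $\mathfrak{C}_i^j(k)$ by interchanging summation (the paper states it directly), and your symmetric rewriting $\sum_{l\in S_j}l=\tfrac12\sum_{l\in S_j}\bigl(l+(d_i+1-l)\bigr)$ handles the central fixed point automatically, avoiding the paper's separate treatment of the cases $d_i$ even and $d_i$ odd.
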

\begin{proof}
Recall that $D = \big(d_{ij}\big)$ with $d_{ij} = \sum_{k=1}^{d_i} \indicator\{\chi_i(k) = j\}$. For uniformly distributed rotors the
first moment matrix $M = \big(m_{ij}\big)$ is given by
\begin{equation*}
m_{ij} = \frac{1}{d_{i+1}}\sum_{k=1}^{d_i} k \indicator\{\chi_i(k) = j \}.
\end{equation*}
Fix $i\in\{1,\dots,\mathsf{N}\}$ assuming $d_i$ is even. We can split $m_{ij}$ into two summands as follows
\begin{align*}
m_{ij} &= \frac{1}{d_i+1}\left(\sum_{k=1}^{\frac{d_i}{2}} k \indicator\{\chi_i(k) = j \} + \sum_{k=\frac{d_i}{2}+1}^{d_i} k \indicator\{\chi_i(k) = j \}\right) \\
&= \frac{1}{d_i+1}\left(\sum_{k=1}^{\frac{d_i}{2}} k \indicator\{\chi_i(k) = j \} + \sum_{k=\frac{d_i}{2}+1}^{d_i} k \indicator\{\chi_i(d_i + 1 - k) = j \}\right), \\
\end{align*}
where in the second line we use that $T$ is palindromic. Changing the order of summation of the second sum and using the
substitution $l = d_i + 1 - k$ gives
\begin{align*}
m_{ij} &= \frac{1}{d_i+1}\left(\sum_{k=1}^{\frac{d_i}{2}} k \indicator\{\chi_i(k) = j \} + \sum_{l=1}^{\frac{d_i}{2}} (d_i + 1 - l) \indicator\{\chi_i(l) = j \}\right) \\
&= \frac{1}{d_i+1}\sum_{k=1}^{\frac{d_i}{2}} (k + d_i +1 - k) \indicator\{\chi_i(k)=j\} \\
&= \sum_{k=1}^{\frac{d_i}{2}} \indicator\{\chi_i(k)=j\} = \frac{1}{2} d_{ij},
\end{align*}
where the last identity is again due to the palindromic property of $T$.
Next assume $d_i$ is odd. We rewrite $m_{ij}$ into three summands
\begin{multline*}
m_{ij} = \frac{1}{d_i+1}\Bigg(\sum_{k=1}^{\frac{d_i-1}{2}} k \indicator\{\chi_i(k) = j \} \\ + \frac{d_i+1}{2}\indicator\Big\{\chi_i\Big(\frac{d_i+1}{2}\Big) = j \Big\} +\sum_{k=\frac{d_i+3}{2}}^{d_i} k \indicator\{\chi_i(k) = j \}\Bigg).
\end{multline*}
Again using the palindromic property of $T$, and using the substitution
$l = d_i + 1 -k$ we can transform the third summand in the last equation
\begin{align*}
\sum_{k=\frac{d_i+3}{2}}^{d_i} k \indicator\{\chi_i(k) = j \} &=
\sum_{k=\frac{d_i+3}{2}}^{d_i} k \indicator\{\chi_i(d_i + 1 - k) = j \}  \\
&=\sum_{l=1}^{\frac{d_i-1}{2}} (d_i + 1 -l) \indicator\{\chi_i(l) = j \}.
\end{align*}
Thus
\begin{align*}
	m_{ij} &= \frac{1}{2} \indicator\Big\{\chi_i\Big(\frac{d_i+1}{2}\Big) = j \Big\} + \frac{1}{d_i + 1}\sum_{k=1}^{\frac{d_i-1}{2}} (k + d_i + 1 -k) \indicator\{\chi_i(k) = j \} \\
	&= \frac{1}{2} \indicator\Big\{\chi_i\Big(\frac{d_i+1}{2}\Big) = j \Big\} + \sum_{k=1}^{\frac{d_i-1}{2}} \indicator\{\chi_i(k) = j \}
	= \frac{1}{2} d_{ij},
\end{align*}
where the last equality is again due to the palindromic property. It follows
that $2M = D$.
\end{proof}

The following is obvious:
\begin{lemma}
\label{lem:rho_a2m_relation}
Let $D$ be a $(\mathsf{N}\times\mathsf{N})$-matrix with spectral radius $\psi$. Let $\gamma$ be the spectral radius of $I+(D-I)(I-\alpha^{-1}\cdot D)^{-1}$, for some real number $\alpha$. If the spectral radius $\psi$ of $D$ is
not equal to $\alpha$ then
\begin{equation*}
\gamma = 1+\frac{\psi - 1}{1 - \psi/\alpha} = \frac{(\alpha-1)\psi}{\alpha-\psi}.
\end{equation*}
\end{lemma}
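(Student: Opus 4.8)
The plan is to exploit that $A:=I+(D-I)(I-\alpha^{-1}D)^{-1}$ is a rational function of the single matrix $D$, so that its spectrum is completely determined by that of $D$. Write $g(x)=1+\dfrac{x-1}{1-x/\alpha}$; the hypothesis $\psi\neq\alpha$ ensures $I-\alpha^{-1}D$ is invertible (and $g$ is finite) on the relevant part of the spectrum, and by the spectral mapping theorem the eigenvalues of $A$ are exactly the numbers $g(\lambda)$ with $\lambda\in\operatorname{spec}(D)$. A one-line computation gives the algebraic identity $g(\psi)=\dfrac{(\alpha-1)\psi}{\alpha-\psi}$, so the entire content of the lemma reduces to the claim that $g(\psi)$ is the eigenvalue of $A$ of largest modulus, i.e.\ $\gamma=\rho(A)=g(\psi)$.

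I would establish this dominance by a direct modulus estimate. Since $D$ is the adjacency matrix of a finite strongly connected graph it is nonnegative and irreducible, so $\psi=\rho(D)$ is an eigenvalue and every eigenvalue $\lambda$ of $D$ satisfies $|\lambda|\le\psi$; in the regime where the lemma is applied one has $\psi<\alpha$ (because $M=\alpha^{-1}D$ has $\rho(M)<1$) and $\alpha>1$. Then, using $|\alpha-\lambda|\ge\alpha-|\lambda|$ and the monotonicity of $t\mapsto t/(\alpha-t)$ on $[0,\alpha)$, one gets $|g(\lambda)|=(\alpha-1)\dfrac{|\lambda|}{|\alpha-\lambda|}\le(\alpha-1)\dfrac{|\lambda|}{\alpha-|\lambda|}\le(\alpha-1)\dfrac{\psi}{\alpha-\psi}=g(\psi)$ for every eigenvalue $\lambda$, which is precisely $\gamma=g(\psi)$. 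An alternative, more in the spirit of the application, is to invoke Perron--Frobenius for $D$: it has a strictly positive eigenvector $v>0$ for $\psi$, hence $v$ is an eigenvector of $A$ for the eigenvalue $g(\psi)>0$, and since $A$ is nonnegative (in the palindromic setting $A=\Gamma^{T}$, the transpose of the Galton--Watson first moment matrix in \eqref{eq:fmm-mbp}) the standard fact that a nonnegative matrix carrying a strictly positive eigenvector attains its spectral radius there gives $\gamma=g(\psi)$ at once.

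The remaining step is the purely algebraic simplification $1+\dfrac{\psi-1}{1-\psi/\alpha}=\dfrac{(1-\psi/\alpha)+(\psi-1)}{1-\psi/\alpha}=\dfrac{\psi(1-1/\alpha)}{1-\psi/\alpha}=\dfrac{(\alpha-1)\psi}{\alpha-\psi}$, which is immediate. I do not expect a genuine obstacle here — as the authors remark, once one notices that $A$ is a function of $D$ the statement is essentially forced. The only point deserving a moment's care is that $\gamma$ denotes the \emph{spectral radius} rather than an arbitrary eigenvalue, so one must certify dominance of $g(\psi)$; for that it is really the sign information $\psi<\alpha$ (and $\alpha>1$), not merely $\psi\neq\alpha$, that does the work, which is why the result is cleanest in the positive-recurrent setting $\rho(\alpha^{-1}D)<1$.
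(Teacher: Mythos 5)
The paper offers no argument at all for this lemma --- it is introduced with ``The following is obvious'' and the proof is omitted --- so there is nothing to match your proposal against line by line; what you have written is a correct and complete proof of the statement the authors actually use, and your route (spectral mapping for the rational function $g(x)=1+\frac{x-1}{1-x/\alpha}=\frac{(\alpha-1)x}{\alpha-x}$ applied to $D$, followed by a dominance estimate or a Perron--Frobenius argument to certify that $g(\psi)$ is the eigenvalue of largest modulus) is surely the intended one. Your most valuable contribution is the observation, which you state explicitly and which the paper glosses over, that the lemma as literally written (arbitrary real matrix $D$, sole hypothesis $\psi\neq\alpha$) is false: for $D=\mathrm{diag}(1,-2)$ and $\alpha=3$ one has $\psi=2\neq 3$, the eigenvalues of $A=I+(D-I)(I-\alpha^{-1}D)^{-1}$ are $g(1)=1$ and $g(-2)=-4/5$, so $\gamma=1$, while the claimed formula gives $\frac{(\alpha-1)\psi}{\alpha-\psi}=4$. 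The statement only becomes true once one adds what the application silently supplies: $D$ nonnegative and irreducible (so that $\psi$ is genuinely an eigenvalue with a positive eigenvector), $\alpha>1$, and $\psi<\alpha$ (equivalently $\rho(\alpha^{-1}D)<1$, the positive recurrent regime), under which either of your two dominance arguments --- the estimate $|g(\lambda)|\le(\alpha-1)\frac{|\lambda|}{\alpha-|\lambda|}\le g(\psi)$, or the fact that a nonnegative matrix with a strictly positive eigenvector attains its spectral radius at the corresponding eigenvalue --- closes the gap. In short: your proof is correct, it is the natural one, and it also repairs an imprecision in the lemma's hypotheses that the paper's ``obvious'' conceals.
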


\begin{theorem}
Let $T$ be a palindromic periodic tree, and consider a rotor walk $(X_n)$ on $T$ with uniform 
initial rotor configuration. Let $\mathsf{br}(T)$ be the branching
number of $T$. Then $(X_n)$ is recurrent if
and only if $\mathsf{br}(T) \leq 2$ and in the positive recurrent case (when $\mathsf{br}(T)<2$)
we have: 
\begin{equation*}
\lim_{n\to \infty} \frac{|R_n|}{n} = \frac{\mathsf{br}(T) -1}{\mathsf{br}(T)} ,
\quad \text{a.s.},
\end{equation*}
\end{theorem}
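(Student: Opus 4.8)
The plan is to deduce this from Theorem~\ref{thm:main-thm}, the recurrence criterion of \cite{huss_sava_trans_directed_covers}, and the two lemmas just established, the only extra ingredient being the classical identification of the branching number of a periodic tree with the Perron--Frobenius eigenvalue of its generating matrix. Throughout, write $\psi = \rho(D)$ for the spectral radius of the adjacency matrix $D$ of the (finite, strongly connected) graph generating $T$; since $D$ is then nonnegative and irreducible, $\psi$ is a genuine eigenvalue with positive eigenvector, and $\lim_{n\to\infty} \#\{x\in T:\, |x|=n\}^{1/n} = \psi$ by Perron--Frobenius. For periodic trees it is classical that the branching number equals this exponential growth rate, so $\mathsf{br}(T) = \psi$; see \cite{lyons-peres-book}. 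I would record this as the first step.

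Next I would use that $T$ is palindromic and the initial rotor configuration is uniform: Lemma~\ref{lem:a2m} then gives $M = \tfrac12 D$, hence $\rho(M) = \tfrac12\psi$. (The standing positive regularity and nonsingularity assumptions required to apply the results of the previous sections hold automatically here, since the uniform distribution on $\{0,\dots,d_i\}$ charges every rotor position.) By the dichotomy of \cite{huss_sava_trans_directed_covers}, $(X_n)$ is recurrent if and only if $\rho(M)\le 1$, that is, if and only if $\psi\le 2$, that is, if and only if $\mathsf{br}(T)\le 2$; and $(X_n)$ is positive recurrent --- the hypothesis $\rho(M)<1$ of Theorem~\ref{thm:main-thm} --- precisely when $\mathsf{br}(T)<2$.

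Finally, in the positive recurrent case I would invoke Theorem~\ref{thm:main-thm}: almost surely $|R_n|/n \to \tfrac12\bigl(1-\tfrac1\gamma\bigr)$, where $\gamma$ is the spectral radius of $I + (D-I)(I-M)^{-1} = I + (D-I)(I-\tfrac12 D)^{-1}$, the inverse existing because $\rho(\tfrac12 D) = \tfrac12\psi < 1$. Applying Lemma~\ref{lem:rho_a2m_relation} with $\alpha = 2$ --- legitimate since $\psi < 2$ forces $2$ to lie outside the spectrum of $D$ --- gives $\gamma = \dfrac{(\alpha-1)\psi}{\alpha-\psi} = \dfrac{\psi}{2-\psi}$, and therefore
\begin{equation*}
\frac12\Bigl(1-\frac1\gamma\Bigr) = \frac12\Bigl(1-\frac{2-\psi}{\psi}\Bigr) = \frac{\psi-1}{\psi} = \frac{\mathsf{br}(T)-1}{\mathsf{br}(T)},
\end{equation*}
which is the asserted limit.

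The proof is thus essentially a chain of substitutions into results already in hand. The only point that deserves genuine care rather than bookkeeping is the first step: fixing the hypotheses on the generating graph (strong connectivity, hence irreducibility of $D$) under which Perron--Frobenius applies and the exponential growth rate of $T$ may be identified both with $\rho(D)$ and with $\mathsf{br}(T)$. Once that identification is in place, both the recurrence characterization and the value of the limit follow mechanically from Lemmas~\ref{lem:a2m} and \ref{lem:rho_a2m_relation} together with Theorem~\ref{thm:main-thm}.
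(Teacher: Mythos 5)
Your proposal is correct and follows essentially the same route as the paper: identify $\mathsf{br}(T)$ with $\rho(D)$, use Lemma~\ref{lem:a2m} to get $D=2M$, read off the recurrence dichotomy from $\rho(M)=\mathsf{br}(T)/2$, and then substitute into Lemma~\ref{lem:rho_a2m_relation} with $\alpha=2$ and the law of large numbers for the range. You merely supply more detail (the Perron--Frobenius justification, the check that $\psi\neq\alpha$, the verification of positive regularity for uniform rotors) than the paper's three-line argument, and you cite Theorem~\ref{thm:main-thm} where the paper cites Theorem~\ref{thm:lln_direct_cover} --- which is, if anything, the more appropriate reference for a statement about all times $n$.
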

\begin{proof}
Recall that the branching number of $T$ is equal to the spectral radius of the matrix $D$. By Lemma \ref{lem:a2m} $D = 2M$. In the
positive recurrent case, we can
apply Lemma \ref{lem:rho_a2m_relation} with $\alpha=2$, which gives 
$\gamma = \frac{\mathbf{br(T)}}{2-\mathbf{br(T)}}$, which together with
Theorem \ref{thm:lln_direct_cover} completes the proof.
\end{proof}

\begin{appendices}

\section{Rotor-recurrent trees}

\begin{definition}
We call a tree $\mathsf{T}$ \emph{rotor-recurrent} if the rotor walk $(X_n)$ on $\mathsf{T}$ with uniform initial rotor configuration is recurrent. If $(X_n)$ is transient on $\mathsf{T}$, we call $\mathsf{T}$ \emph{rotor-transient}.
\end{definition}

Not much is known about the recurrence and transience of rotor walks on graphs other than trees.
But even on trees we can give examples of unusual properties of
rotor-recurrence that show that a general theory of rotor-recurrence will necessarily
be much more involved than the theory of recurrence of random walks.
In \cite{huss_sava_trans_directed_covers} the authors give an example of a tree that
is rotor-recurrent or rotor-transient depending on its planar embedding into the plane.
This suggests that the underlying graph for the rotor walk does not provide enough information for
stating a rotor-recurrence criteria. The whole ribbon structure of the graph, which determines
the way the rotors turn, will be needed for that.
The rotor-recurrence has even more peculiar properties as the following example shows.

\begin{figure}
	\centering
	\input{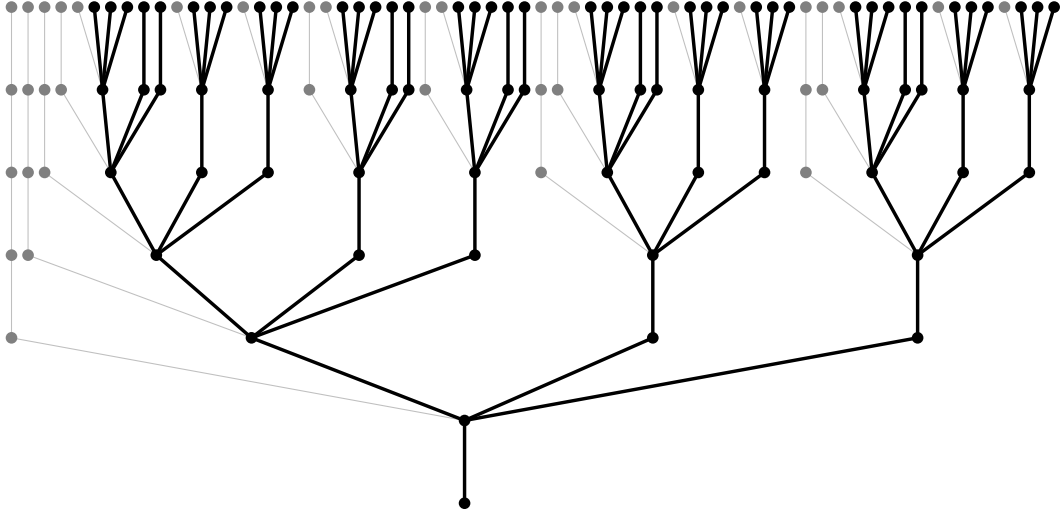}
	\caption{A rotor-recurrent tree that contains a transient subtree, which is drawn in black.}
\end{figure}

\begin{example}
Let $\mathsf{T}$ be the direct cover given by the following generator $\chi$. We use the notation of
\cite{huss_sava_trans_directed_covers}; note the planar embedding of the tree, and
thus the rotor-mechanism, already specified by the table $\chi$.
\begin{center}
\begin{tabular}{lc|cccc}
	\multicolumn{2}{c|}{} & \multicolumn{4}{c}{\scalebox{0.7}{$k\rightarrow$}} \\
	\multicolumn{2}{c|}{$\chi_i(k)$} & 1 & 2 & 3 & 4\\
	\hline
	\multirow{5}{*}{\scalebox{0.7}{\rotatebox{-90}{\rotatebox{90}{$i$} $\rightarrow$}}}
	& 1 & 2 & 2 & 1 & 3\\
	& 2 & 1 \\
	& 3 & 4 \\
	& 4 & 5 \\
	& 5 & 2
\end{tabular}
\end{center}
The rotor walk on $\mathsf{T}$ has the first moment matrix $M$ given by
\begin{equation*}
M = \begin{pmatrix}
	\frac{3}{5} & \frac{3}{5} & \frac{4}{5} & 0 & 0 \\
	\frac{1}{2} & 0 & 0 & 0 & 0 \\
	0 & 0 & 0 & \frac{1}{2} & 0 \\
	0 & 0 & 0 & 0 & \frac{1}{2} \\
	0 & \frac{1}{2} & 0 & 0 & 0
\end{pmatrix},
\end{equation*}
with spectral radius $\rho(M) = 0.967$. By \cite[Theorem 3.5]{huss_sava_trans_directed_covers}, since $\rho(M)<1$, $\mathsf{T}$ is rotor-recurrent.
We now construct a subtree $\bar{\mathsf{T}}$ from $\mathsf{T}$ by deleting all vertices of type $3$ and
all their descendants. Thus $\bar{\mathsf{T}}$ is the direct cover defined by the generator $\bar{\chi}$ as follows:
\begin{center}
	\begin{tabular}{lc|ccc}
		\multicolumn{2}{c|}{} & \multicolumn{3}{c}{\scalebox{0.7}{$k\rightarrow$}} \\
		\multicolumn{2}{c|}{$\bar{\chi}_i(k)$} & 1 & 2 & 3\\
		\hline
		\multirow{2}{*}{\scalebox{0.7}{\rotatebox{-90}{\rotatebox{90}{$i$} $\rightarrow$}}}
		& 1 & 2 & 2 & 1\\
		& 2 & 1 \\
	\end{tabular}
\end{center}
The rotor walk on $\bar{\mathsf{T}}$ has first moment matrix $\bar{M}$ given by
\begin{equation*}
\bar{M} = \begin{pmatrix}
\frac{3}{4} & \frac{3}{4} \\
\frac{1}{2} & 0
\end{pmatrix},
\end{equation*}
which has spectral radius $\rho(\bar{M}) = 1.093$. Hence, by \cite[Theorem 3.5]{huss_sava_trans_directed_covers}, $\bar{\mathsf{T}}$ is rotor-transient.
\end{example}

\end{appendices}

\begin{remark}
There exists rotor-recurrent graphs that contain rotor-transient subgraphs.
\end{remark}

\bibliography{range}{}
\bibliographystyle{alpha_arxiv}

\textsc{Wilfried Huss}
\texttt{husswilfried@gmail.com};

\textsc{Ecaterina Sava-Huss, Department of Mathematics, Innsbruck University, Austria.}
\texttt{Ecaterina.Sava-Huss@uibk.ac.at};

\end{document}